\documentclass[11pt]{amsart}

\usepackage[colorlinks,linkcolor=blue,anchorcolor=magenta,citecolor=red,CJKbookmarks=True]{hyperref}
\usepackage{lineno,hyperref}
\modulolinenumbers[5]
\usepackage{amsmath}
\usepackage{amssymb}
\usepackage{mathrsfs}
\usepackage{amsthm}
\usepackage{bm}
\usepackage{graphicx}
\usepackage{booktabs}
\usepackage{float}
\usepackage{subfig}
\usepackage{geometry}
\usepackage{bbm}
\usepackage{color}
\geometry{left=3cm,right=3cm,top=3cm,bottom=3cm}
\usepackage{upgreek}

\newtheorem{Def}{Definition}[section]
\newtheorem{lem}[Def]{Lemma}
\newtheorem{tho}[Def]{Theorem}
\newtheorem{pro}[Def]{Proposition}
\newtheorem{rem}[Def]{Remark}

\newtheorem{cor}[Def]{Corollary}
\newtheorem{hyp}[Def]{Assumption}

\newcommand{\RNum}[1]{\uppercase\expandafter{\romannumeral #1\relax}}
\newcommand{\ud}{\mathrm d}
\allowdisplaybreaks[4]

\begin{document}

\title[]
{A splitting semi-implicit method for stochastic incompressible Euler equations on $\mathbb T^2$}
\subjclass[2010]{Primary 65C30, 60H35, 35Q31, 60H15}

\author{Jialin Hong}
\address{Academy of Mathematics and Systems Science, Chinese Academy of Sciences, Beijing
 100190, China; School of Mathematical Sciences, University of Chinese Academy of
 Sciences, Beijing 100049, China}
\email{hjl@lsec.cc.ac.cn}

\author{Derui Sheng}
\address{Academy of Mathematics and Systems Science, Chinese Academy of Sciences, Beijing
 100190, China; School of Mathematical Sciences, University of Chinese Academy of
 Sciences, Beijing 100049, China}
\email{sdr@lsec.cc.ac.cn}
\author{Tau Zhou}
\address{Academy of Mathematics and Systems Science, Chinese Academy of Sciences, Beijing
 100190, China; School of Mathematical Sciences, University of Chinese Academy of
 Sciences, Beijing 100049, China}
\email{zt@lsec.cc.ac.cn (Corresponding author)}

\thanks{This work is supported by National Natural Science Foundation of China (No. 11971470, No. 11871068, No. 12031020, No. 12022118).}

\date{\today}

\dedicatory{}

\begin{abstract}

The main difficulty in studying numerical method for stochastic evolution equations (SEEs) lies in the treatment of the time discretization (\cite{MR1873517}). Although fruitful results  on numerical approximations for SEEs have been developed, as far as we know, none of them include that of stochastic incompressible Euler equations. To bridge this gap, this paper proposes and analyses a splitting semi-implicit method in temporal direction for stochastic incompressible Euler equations on torus $\mathbb{T}^2$ driven by an additive noise. By a Galerkin approximation  and the fixed point technique, we establish  the unique  solvability of the proposed method. Based on the regularity estimates of both exact and numerical solutions, we measure the error in $L^2(\mathbb{T}^2)$ and show that the pathwise convergence order is nearly $\frac{1}{2}$ and the  convergence order in probability is almost $1$.
\end{abstract}
\keywords{incompressible Euler equation, convergence order, splitting technique}

\maketitle

\section{Introduction} 
In this paper, we consider the stochastic incompressible Euler equations on torus $\mathbb{T}^2:=\mathbb{R}^{2}/\mathbb{Z}^{2}$ 
\begin{equation}
\left\{\begin{array}{l}
\mathrm{d} u+(u \cdot \nabla) u \mathrm{d} t+\nabla \bm{\pi} \mathrm{d} t=\mathrm{d} W, \ t\in (0,T],\\
\nabla \cdot u=0
\end{array}\right.\label{Euler}
\end{equation}
with $T>0$ and an initial value $u(0)$ in appropriate function spaces. Here, the velocity field $u$ and the pressure scalar field $\bm{\pi}$ describe the motion of an incompressible, inviscid, homogeneous fluid, and $W$ is a Hilbert space valued Wiener process on a complete filtered probability space $(\Omega, \mathcal{F},\{\mathcal{F}_{t}\}_{t\geq 0}, \mathbb{P})$. As a classical model in fluid mechanics theory, Eq. \eqref{Euler} is closely related to turbulence theory (see e.g. \cite{MR1245492}), where the random external force is employed to simulate uncertainties from environments. There are a plenty of works on the mathematical theory of two-dimensional stochastic Euler equations. For instance, we refer to \cite{MR1674779,MR3483892,MR1880243,MR1722278,MR3161482} for the results on the well-posedness for the multiplicative noise case, to \cite{MR4107937,MR2213944} for the existence of invariant measures for the additive noise case, and to \cite{MR3947288} for the study of the associated Kolmogorov equation for the transport noise case. However, as far as we know, there is few results about the numerical approximations of stochastic Euler equations. As stated in \cite{MR1873517}, the main difficulty in studying numerical method for stochastic evolution equations (SEEs) lies in the treatment of the time discretization. This motivates us to construct a temporal semi-discretization approximation for Eq. \eqref{Euler} and to explore its convergence rate.

Owing to the absence of explicit expressions of true solutions, extensive literature on numerical approximations for SEEs has emerged in recent years. Compared with the case of global Lipschitz nonlinearity, the convergence analysis of numerical methods for SEEs with non-global Lipschitz coefficients, such as stochastic Euler equations and stochastic Navier--Stokes equations, is more challenging. Let us first introduce some recent works on the numerical analysis of 2D stochastic incompressible Navier--Stokes equations. By utilizing a local monotonicity trick, \cite{MR3081484} deduces the convergence of finite-element-based space-time discretizations. Besides, a splitting up temporal semi-discretization, and an implicit and a semi-implicit space-time discretizations are investigated in \cite{MR3274888} and \cite{MR3022227}, respectively, where the mean square convergence rate localized on a sample subset with large probability and the speed of the convergence in probability are derived. Recently, Bessaih and Millet focus on temporal semi-discretizations in \cite{MR4019052} and a space-time fully discretization in \cite{2020arXiv200406932B}, and first give the results on the strong convergence speed. Their results reveal that the speed of the strong convergence depends on the diffusion coefficient and on the viscosity parameter. For other kinds of semilinear SEEs with non-global Lipschitz nonlinearity, their numerical analysis can be found in, for example, \cite{MR3813549,MR4019051,MR3826675,MR4123754} and references therein. For fully nonlinear SEEs, 
\cite{MR2139212} considers general monotone SEEs in variational framework and presents the convergence of various discretizations, although without rates of convergence. We would like mention that the nonlinear term in Euler equations is neither of Lipschitz nor of monotone type.

By introducing the so-called vorticity $\xi=\nabla^{\perp}\cdot u$, we study the vorticity equation (see \eqref{vorticity} below), which is equivalent to \eqref{Euler} in view of the Biot--Savart law (cf. \cite[Chapter 7]{MR2768550}).  Let $\left\{0=t_{0}<t_{1}<\cdots<t_{n}=T\right\}$ be a finite partition of $[0, T]$ with a uniform mesh $\tau=T/n$, $n\in\mathbb{N}^+$. We use the splitting technique to separate the noise and the nonlinear term, and propose the following splitting semi-implicit Euler (SIE) method for the vorticity equation 
\begin{align}
\bar{\xi}_{i+1}=\xi_{i}-\tau(K*\xi_{i})\cdot \nabla\bar{\xi}_{i+1},\quad \xi_{i+1}=\bar{\xi}_{i+1}+\Delta W^{curl}_{i+1}, \quad i=0,\dots,n-1, \label{method}
\end{align}
with $\Delta W^{curl}_{i+1}=W^{curl}(t_{i+1})-W^{curl}(t_{i})$ being the increment of $W^{curl}=\nabla^{\perp} \cdot W$. In comparison to the fact that the deterministic Euler flow preserves any $L^{p}(\mathbb{T}^2)$-norm, $1\le p\le \infty$, of the vorticity $\xi$, the SIE method is shown to guarantee the $L^{p}(\mathbb{T}^2)$-norm non-increasing. The unique solvability of \eqref{method} in the distribution sense is proved via a Galerkin approximation argument and Banach-Alaoglu theorem. Taking advantage of the integrability and divergence free property of the Biot–Savart kernel, we show that the numerical solutions $\{\xi_{i}\}_{i=0}^{n}$ belong to $W^{1,4}(\mathbb{T}^2)$ almost surely for sufficiently small step size, which implies that the unique weak solution of \eqref{method} is also a strong solution. 
We remark that without the dominant Laplacian operator, Euler equations behave far worse than Navier-Stokes equations, and the regularity estimates of the numerical soutions of the splitting SIE scheme for  Euler equations are quiet different from that for Navier-Stokes equations (see Remark \ref{regularity}). Besides, we show that the numerical solutions $\{\xi_i\}_{i=0}^n$ are bounded in $L^4(\mathbb{T}^2)$ almost surely with a bound proportional to $\tau^{-\varepsilon}$ for any $\varepsilon>0$, which will be used in the error estimates.

The present work measures the error between $\xi(t_{i})$ and $\xi_{i}$ in $L^{2}(\mathbb{T}^2)$, and aims to derive the optimal convergence order in probability. The rate of convergence in probability matches well with SEEs enjoying non-globally Lipschitz nonlinearities, whose study can be traced back to the work of Printems \cite{MR1873517}. The interaction between the stochastic forcing and the essentially quadratic nonlinear term $K*\xi\cdot \nabla \xi$ is one of main obstacles of numerical analysis for stochastic Euler equations. Our analysis of convergence in probability relies on two key ingredients, one being a priori estimate of pathwise convergence order, another being the application of the Lenglart-Rebolledo inequality.
Inspired by the work of Bessaih and Ferrario \cite{MR4107937}, we impose a suitable spatial regularity assumption on $W$ and demonstrate that almost all paths of the exact solution live in $W^{2,4}(\mathbb{T}^2)$ with the help of a Beale–Kato–Majda type inequality, the Moser estimate, and the commutator estimate. Based on the spatial regularity, we show that the temporal H\"older exponent of the exact solution in $H^1(\mathbb{T}^2)$ is close to $\frac{1}{2}$. By the orthogonality property $\langle K*\xi\cdot\nabla \zeta,\zeta\rangle=0$ of the nonlinear term, we deduce that the pathwise convergence order is no less than the H\"older exponent of the exact solution. Finally, applying a recursion argument, we improve the order of convergence in probability to nearly 1, which is optimal since our method only uses the information about the increments of $W$ (cf. \cite{MR609181}). To the best of our knowledge, this is the first work about the convergence analysis of numerical approximations for stochastic Euler equations on torus. 

The rest of this article is organized as follows. Some notations and basic inequalities are introduced in Section \ref{sec2}, which will be used throughout the work. In Section \ref{sec3}, we present the unique solvability of the splitting SIE method and regularity estimates of the associated numerical solutions. Section \ref{sec4} carries out the error analysis for the considered method in pathwise and probability senses.

\section{Preliminaries} \label{sec2}
This section is devoted to introducing some basic notations and mathematical tools used in the sequel. Throughout this paper, we use $C$ to denote a generic positive constant which is independent of step size $\tau$ and may differ from occurrence to occurrence. Sometimes, we write $C(\alpha, \beta)$ or $C_{\alpha,\beta}$ to emphasise its dependence on certain quantities $\alpha, \beta$.
\subsection{Mathematical setting}
For $n=1$ or $2$,  we denote by $\langle\cdot, \cdot \rangle$ the $[L^{2}(\mathbb{T}^2)]^n$-inner product  and by $|\cdot|_{p}$  the $\left[L^{p}(\mathbb{T}^2)\right]^{n}$-norm  for $p\geq 1$. For $m \geq 1$ and $p\geq 1$, $[W^{m, p}(\mathbb{T}^2)]^n$ and $[H^{m}(\mathbb{T}^2)]^n$ are the classical Sobolev spaces, endowed with the usual norms $\|\cdot\|_{m, p}$ and $\|\cdot\|_{m}$, respectively. Let $H_{1}$ and $H_{2}$ be two separable Hilbert spaces and $L_{2}(H_{1},H_{2})$ be the space of Hilbert-Schmidt operators from $H_{1}$ to $H_{2}$. Denote by $\mathcal{E}$ (resp. $\mathcal{H}$) the space of periodic functions (resp. periodic divergence free vector fields) which are square integrable with zero mean value on $\mathbb{T}^2$. Both $\mathcal{E}$ and $\mathcal{H}$ are separable Hilbert spaces, equipped with the $L^{2}(\mathbb{T}^2)$-inner product and $\left[L^{2}(\mathbb{T}^2)\right]^{2}$-inner product, respectively. Indeed, denoting $\mathbb{Z}_{0}^{2}:=\mathbb{Z}^{2} \backslash\{0\}$, $\mathbb{Z}_{+}^{2}:=\left\{k \in \mathbb{Z}_{0}^{2}: k_{1}>0 \text{ or } k_{1}=0, k_{2}>0\right\}~\text{and}~\mathbb{Z}_{-}^{2}=-\mathbb{Z}_{+}^{2}$, then $\left\{e_{k}\right\}_{k \in \mathbb{Z}_{0}^{2}}$ and $\left\{g_{k}\right\}_{k \in \mathbb{Z}_{0}^{2}}$  are complete orthonormal bases of $\mathcal{E}$ and $\mathcal{H}$, respectively (cf. \cite[Section 1]{MR2213944}):
\begin{align*}
e_{k}(x)=\left\{\begin{array}{ll}
\sqrt{2}\cos (2 \uppi k \cdot x), & k \in \mathbb{Z}_{+}^{2} ,\\
\sqrt{2}\sin (2 \uppi k \cdot x), & k \in \mathbb{Z}_{-}^{2},
\end{array}\right.
\end{align*}
and
\begin{align*}
g_{k}(x)=\frac{k^{\perp}}{|k|}e_{k}(x), \text{ with  }k^{\perp}=\left(k_{2},-k_{1}\right).
\end{align*}

Let $\mathcal{G}:=\{\nabla g:g\in H^{1}\left(\mathbb{T}^2\right) \cap\mathcal{E}\}$ be the space of gradients of periodic
functions in $H^{1}\left(\mathbb{T}^2\right)$. Then we have the following Helmholtz decomposition (see, e.g., \cite[p15]{MR0609732})
\begin{align}
[L^{2}(\mathbb{T}^2)]^2/\mathbb{R}^2=\mathcal{G}\oplus \mathcal{H}, \label{decomposition}
\end{align}
where $\oplus$ denotes the direct sum operation. The decomposition \eqref{decomposition} implies the existence of a unique orthogonal projection operator $\mathcal{P}:\left[L^{2}\left(\mathbb{T}^2\right)\right]^2/\mathbb{R}^2 \rightarrow \mathcal{H}$. 
Thanks to the incompressibility condition, $\bm{\pi}$ can be uniquely determined by $u$ through the following equation
\begin{align}
\nabla \bm{\pi} =(\mathcal{P}-{\rm Id})(u \cdot \nabla) u \label{pre}.
\end{align}
We formally write $\bm{\pi}=\nabla^{-1}(\mathcal{P}-{\rm Id})(u \cdot \nabla) u$ the unique solution of \eqref{pre} in $H^{1}(\mathbb{T}^2)\cap \mathcal{E}$.

Let $\left\{\beta_{k}(t) ; t \geq 0\right\}_{k \in \mathbb{Z}_{0}^{2}}$ be a sequence of independent standard real-valued Wiener processes defined on $(\Omega, \mathcal{F},\{\mathcal{F}_{t}\}_{t\geq 0}, \mathbb{P})$ and $\tilde{\beta}_k(\cdot):=-\beta_{-k}(\cdot),k \in \mathbb{Z}_{0}^{2}$.  The noise $W$ in Eq. (\ref{Euler}) is a Wiener process in $\mathcal{H}$ which has the following Karhunen-Loève expansion
\begin{align}
W(t)=\sum_{k \in \mathbb{Z}_{0}^{2}} c_{k} \beta_{k}(t) g_{k} \label{noise}
\end{align}
with $\{c_{k}\}_{k\in \mathbb{Z}_{0}^{2}} \subset \mathbb{R}_{+}$.

The following assumption on the spatial regularity of $W$ is required in the convergence analysis.
\begin{hyp} 
The sequence $\{c_{k}\}_{k\in \mathbb{Z}_{0}^{2}}$ in \eqref{noise} satisfies for some $h\ge\frac{9}{2}$  \label{assum}
\begin{align}
\sum_{k \in \mathbb{Z}_{0}^{2}} c_{k}^{2}\left\|g_{k}\right\|_{h}^{2}<\infty.  \label{condition}
\end{align}
\end{hyp}

For a divergence free velocity field $u$, we define the vorticity $\xi$ by $\xi=\nabla^{\perp} \cdot u=\partial_{1}u_{2}-\partial_{2}u_{1}$. Then the stochastic Euler equation (\ref{Euler}) is equivalent to the following vorticity equation (see e.g. \cite[Section 2]{MR2355417}):
\begin{align}
\mathrm{d}\xi(t)+(K *\xi(t))\cdot \nabla\xi(t) \mathrm{d} t=\mathrm{d} W^{curl}(t), \label{vorticity}
\end{align}
where $K$ is the Biot-Savart kernel and
\begin{align*}
W^{curl}(t)=\nabla^{\perp} \cdot W(t)=2\uppi \sum_{k \in \mathbb{Z}_{0}^{2}}  |k|c_{-k} \tilde{\beta}_{k}(t)  e_{k}.
\end{align*}
It is known that the divergence of $K*\xi$ vanishes and $\xi=\nabla^{\perp} \cdot (K*\xi)$. 

Under Assumption \ref{assum}, it can be seen that $W$ is an $[H^{h}(\mathbb{T}^2)]^{2}\cap \mathcal{H}$-valued $Q$-Wiener process with $Qg_k=c_k^2g_k, k\in\mathbb{Z}_{0}^{2}$, and $W^{curl}$ is an $H^{h-1}(\mathbb{T}^2)\cap \mathcal{E}$-valued $Q_{1}$-Wiener process with $Q_{1}e_{k}=(2\uppi|k|c_{-k})^{2}e_{k},k\in\mathbb{Z}_{0}^{2}$. It follows from the embedding $H^{h}(\mathbb{T}^2)\hookrightarrow  W^{m,\infty}(\mathbb{T}^2),m<h-1$, that the paths $W\in C\left(\mathbb{R}_{+} ; [W^{m, \infty}(\mathbb{T}^2)]^{2}\cap \mathcal{H}\right)$ a.s.

\subsection{Some useful inequalities}
The following lemma states that $K:L^p(\mathbb{T}^2)\to W^{1,p}(\mathbb{T}^2)$ is bounded (see, e.g., details in \cite[Theoren 2.2]{MR158189}). 
\begin{lem} \label{Kbound}
 For any $p \in[2, \infty)$ and $\xi\in\mathcal{E}\cap L^{p}(\mathbb{T}^2)$, we have 
 $$|K*\xi|_{p} \leq C  |\xi|_{p} $$ and
$$
 \frac{1}{C_p} |\xi|_{p}\leq |\nabla (K*\xi)|_{p} \leq C_p |\xi|_{p}. 
$$
\end{lem}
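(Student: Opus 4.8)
The plan is to combine an elementary convolution estimate for the first bound with Calder\'on--Zygmund theory for the gradient bound, using the identity $\xi=\nabla^{\perp}\cdot(K*\xi)$ for the lower bound. I would begin by recalling the explicit structure of the Biot--Savart kernel on $\mathbb{T}^2$: writing $K*\xi=\nabla^{\perp}\Delta^{-1}\xi$, the kernel $K$ has a single singularity at the origin of the form $K(x)\sim \frac{1}{2\uppi}\,\frac{x^{\perp}}{|x|^{2}}$, so that $|K(x)|\sim|x|^{-1}$ near $0$; on the Fourier side $K*$ is the multiplier of order $-1$ with symbol proportional to $k^{\perp}/|k|^{2}$, which is well defined precisely because the zero mode is excluded by the constraint $\xi\in\mathcal{E}$.

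For the first inequality I would observe that the only obstruction to integrability of $K$ is the $|x|^{-1}$ singularity, which is integrable in two dimensions (indeed $K\in L^{q}(\mathbb{T}^2)$ for every $q<2$, in particular $K\in L^{1}(\mathbb{T}^2)$). Young's convolution inequality then gives $|K*\xi|_{p}\le|K|_{1}\,|\xi|_{p}=C|\xi|_{p}$, with a constant that can even be taken independent of $p$.

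The main content is the upper bound $|\nabla(K*\xi)|_{p}\le C_{p}|\xi|_{p}$. Here $\nabla(K*\xi)$ corresponds to the matrix of second-order Riesz-type multipliers with symbols $k_{i}k_{j}/|k|^{2}$, equivalently to a principal-value convolution against $\nabla K$, which is homogeneous of degree $-2$ with vanishing angular mean, i.e.\ a genuine Calder\'on--Zygmund kernel. Applying the Mikhlin--H\"ormander multiplier theorem in its periodic form (or, equivalently, transferring the classical $\mathbb{R}^2$ Calder\'on--Zygmund estimate to the torus) yields $L^{p}$-boundedness for every $p\in(1,\infty)$, with a constant $C_{p}$ that depends on $p$ and degenerates as $p\to1$ or $p\to\infty$. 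I expect this to be the only nontrivial obstacle, and it is the source of the $p$-dependence in the statement; care is needed to verify the smoothness and decay of the symbol away from the origin so that Mikhlin's condition genuinely applies on the discrete frequency lattice $\mathbb{Z}_{0}^{2}$.

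Finally, the lower bound is essentially free: the identity $\xi=\nabla^{\perp}\cdot(K*\xi)$ recalled above expresses $\xi$ as a first-order differential operator applied to $K*\xi$, so that pointwise $|\xi|\le C|\nabla(K*\xi)|$, and integrating gives $\frac{1}{C_{p}}|\xi|_{p}\le|\nabla(K*\xi)|_{p}$ with a universal constant. Thus, once the homogeneity of $K$ and the zero-mean condition $\xi\in\mathcal{E}$ are in place, the remaining pieces are soft and only the Calder\'on--Zygmund estimate for the gradient carries the analytic weight.
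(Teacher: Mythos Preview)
Your argument is correct. The paper does not supply its own proof of this lemma but simply cites \cite[Theorem~2.2]{MR158189}; your approach---Young's convolution inequality via $K\in L^{1}(\mathbb{T}^2)$ for the first bound, Calder\'on--Zygmund/Mikhlin multiplier theory for $|\nabla(K*\xi)|_{p}\le C_{p}|\xi|_{p}$, and the pointwise identity $\xi=\nabla^{\perp}\cdot(K*\xi)$ for the lower bound---is exactly the standard route one expects such a reference to contain.
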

The embedding relation $W^{1,2}(\mathbb{T}^2) \hookrightarrow  L^{4}(\mathbb{T}^2)$ together with Lemma \ref{Kbound} yields
\begin{align}
|K*\xi|_{4}\leq C ||K*\xi||_{1,2}\leq C|\xi|_{2}. \label{K4}
\end{align}

The following Moser estimate and commutator estimate are useful tools in our analysis (cf. \cite[Section 2]{MR3161482}).
\begin{lem} 
Let $\mathcal{D}$ be a smooth bounded simply-connected domain in $\mathbb{R}^{d}$.

i) If $m>d/p$ and $f, g \in W^{m, p}(\mathcal{D})$, then there is a universal constant $C=C(m, p, \mathcal{D})>0$ such that
\begin{align}
\|f g\|_{m, p} \leq C\left(|f|_{\infty}\|g\|_{m, p}+|g|_{\infty}\|f\|_{m, p}\right). \label{Moser}
\end{align}
In particular, this shows that $W^{m, p}$ is an algebra whenever $m>d / p$. 

ii) If $m>1+d/p, f \in W^{m, p}(\mathcal{D})$, and $g \in$ $W^{m+1, p}(\mathcal{D})$, then there is a universal constant $C=C(m, p, \mathcal{D})>0$ such that
\begin{align}
\sum_{0 \leq|\alpha| \leq m}\left|\partial^{\alpha}(f \cdot \nabla g)-f \cdot \nabla \partial^{\alpha} g\right|_{p} \leq C\left(\|f\|_{m, p}|\nabla g|_{\infty}+|\nabla f|_{\infty}\|g\|_{m, p}\right). \label{commutator}
\end{align}
\end{lem}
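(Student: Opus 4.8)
The plan is to derive both inequalities from the Leibniz rule combined with the Gagliardo--Nirenberg interpolation inequalities, so I first treat part i). Expanding $\partial^{\alpha}(fg)=\sum_{\beta\le\alpha}\binom{\alpha}{\beta}\partial^{\beta}f\,\partial^{\alpha-\beta}g$ and summing over $|\alpha|\le m$, it suffices to bound each $|\partial^{\beta}f\,\partial^{\gamma}g|_{p}$ with $|\beta|+|\gamma|\le m$. Writing $j=|\beta|$, I would apply Hölder's inequality with the conjugate pair determined by $\tfrac{j}{mp}+\tfrac{m-j}{mp}=\tfrac1p$, giving $|\partial^{\beta}f\,\partial^{\gamma}g|_{p}\le|\partial^{\beta}f|_{mp/j}\,|\partial^{\gamma}g|_{mp/(m-j)}$, and then invoke the Gagliardo--Nirenberg inequalities $|\partial^{\beta}f|_{mp/j}\le C|f|_{\infty}^{1-j/m}\|f\|_{m,p}^{j/m}$ and $|\partial^{\gamma}g|_{mp/(m-j)}\le C|g|_{\infty}^{j/m}\|g\|_{m,p}^{1-j/m}$. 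Multiplying produces $C\bigl(|f|_{\infty}\|g\|_{m,p}\bigr)^{1-j/m}\bigl(\|f\|_{m,p}|g|_{\infty}\bigr)^{j/m}$, and one application of Young's inequality $X^{1-\theta}Y^{\theta}\le X+Y$ with $\theta=j/m$ yields the stated bound after summing the finitely many multi-indices. The hypothesis $m>d/p$ enters precisely to guarantee $W^{m,p}(\mathcal{D})\hookrightarrow L^{\infty}(\mathcal{D})$, so that $|f|_{\infty},|g|_{\infty}$ are finite and the interpolation endpoints are admissible.

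For part ii) I would again use Leibniz, observing that the $\beta=0$ summand of $\partial^{\alpha}(f\cdot\nabla g)$ is exactly $f\cdot\nabla\partial^{\alpha}g$, so that
\begin{align*}
\partial^{\alpha}(f\cdot\nabla g)-f\cdot\nabla\partial^{\alpha}g=\sum_{0<\beta\le\alpha}\binom{\alpha}{\beta}\,\partial^{\beta}f\cdot\nabla\partial^{\alpha-\beta}g.
\end{align*}
Every surviving term now carries at least one derivative on $f$ and at least one on $g$ (the gradient). The requirement $g\in W^{m+1,p}$ serves only to make the two pieces of the commutator individually well defined, the order $m+1$ appearing when all derivatives fall on $g$ in the expansion; the difference itself involves at most $m$ derivatives on $g$, consistent with $\|g\|_{m,p}$ on the right-hand side. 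The two extreme terms are immediate: when $|\beta|=1$ the factor on $f$ is controlled by $|\nabla f|_{\infty}$ and the rest by $\|g\|_{m,p}$, while when $|\beta|=|\alpha|$ the roles reverse to produce $\|f\|_{m,p}|\nabla g|_{\infty}$. For the intermediate terms I would rerun the Hölder-plus-Gagliardo--Nirenberg argument of part i), except that the interpolation is now anchored at $|\nabla f|_{\infty}$ and $|\nabla g|_{\infty}$ rather than at $|f|_{\infty}$ and $|g|_{\infty}$; this shift of base point is exactly what forces the stronger hypothesis $m>1+d/p$, securing $W^{m,p}(\mathcal{D})\hookrightarrow W^{1,\infty}(\mathcal{D})$. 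A final Young's inequality and summation over $\alpha,\beta$ deliver $C\bigl(\|f\|_{m,p}|\nabla g|_{\infty}+|\nabla f|_{\infty}\|g\|_{m,p}\bigr)$.

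The routine part is the bookkeeping of multi-indices and binomial coefficients, which only affects the implicit constant $C=C(m,p,\mathcal{D})$. The genuine obstacle is confirming that the Gagliardo--Nirenberg exponents chosen for the intermediate terms are all admissible on the bounded domain $\mathcal{D}$; with $|\partial^{\beta}f|_{p_{1}}\le C|\nabla f|_{\infty}^{1-a}\|f\|_{m,p}^{a}$ and $|\nabla\partial^{\alpha-\beta}g|_{p_{2}}\le C|\nabla g|_{\infty}^{1-b}\|g\|_{m,p}^{b}$, one finds $p_{1}=p/a$, $p_{2}=p/b$, so the Hölder constraint $\tfrac{1}{p_{1}}+\tfrac{1}{p_{2}}=\tfrac1p$ amounts to the balance $a+b=1$, which holds exactly at the top order $|\alpha|=m$. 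The top-order case is thus the binding one; the lower-order cases, where $a+b<1$, are absorbed through the embedding $L^{p'}(\mathcal{D})\hookrightarrow L^{p}(\mathcal{D})$ on the bounded domain. The smoothness and simple-connectedness of $\mathcal{D}$ are used precisely to guarantee the extension and embedding theorems underlying these interpolation inequalities, and the sharpness of the conditions $m>d/p$ (resp.\ $m>1+d/p$) manifests at these endpoint estimates.
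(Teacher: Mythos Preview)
The paper does not give its own proof of this lemma; it simply records the two inequalities with a citation (cf.\ \cite[Section 2]{MR3161482}), so there is no in-paper argument to compare against. Your sketch is the classical route---Leibniz expansion, H\"older with exponents $mp/j$ and $mp/(m-j)$, Gagliardo--Nirenberg interpolation between $|f|_\infty$ and $\|f\|_{m,p}$, then Young---and it is correct; the commutator case is likewise handled by the standard shift to interpolating $\partial^\beta f$ between $|\nabla f|_\infty$ and $\|f\|_{m,p}$, and your exponent check $a+b=1$ at the top order $|\alpha|=m$ is exactly the binding constraint. One small point worth tightening: on a bounded domain the Gagliardo--Nirenberg inequality in the form $|\nabla^j f|_{mp/j}\le C|f|_\infty^{1-j/m}\|f\|_{m,p}^{j/m}$ requires the full Sobolev norm (not merely $|\nabla^m f|_p$) on the right, and its validity rests on an extension operator for $\mathcal D$; you allude to this, but in a written proof it deserves an explicit reference.
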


The next lemma is useful to derive the order of convergence in probability (cf. \cite[\RNum{1}.9 Theorem 3]{MR1022664}). 
\begin{lem}\label{LRineq}
(Lenglart-Rebolledo inequality) Suppose that $M =\{M_t: t \geq 0\}$ is a real-valued continuous local martingale on $(\Omega, \mathcal{F},\{\mathcal{F}_{t}\}_{t\geq 0}, \mathbb{P})$ with $M_0=0$. Then, for all $\varepsilon>0$ and $\delta>0$
$$
\mathbb P\left(\sup _{0 \leq t \leq T}|M_t|>\varepsilon\right) \leq \frac{3\delta}{\varepsilon}+\mathbb P(\langle M\rangle_T^{\frac{1}{2}}>\delta),
$$
where $\langle M\rangle$ is the quadratic variation of $M$.
\end{lem}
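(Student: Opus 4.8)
The plan is to reduce the local martingale $M$ to a genuinely square-integrable martingale by a well-chosen localization, and then to combine a standard decomposition of the event $\{\sup_{t\le T}|M_t|>\varepsilon\}$ with Doob's maximal inequality. Concretely, I would introduce the stopping time
\[
\tau_\delta:=\inf\{t\ge 0:\langle M\rangle_t\ge \delta^2\},
\]
which is a stopping time because $t\mapsto\langle M\rangle_t$ is continuous, nondecreasing and $\langle M\rangle_0=0$. Since $\langle M\rangle$ is continuous, the stopped quadratic variation satisfies $\langle M\rangle_{t\wedge\tau_\delta}\le\delta^2$ for every $t$, so the stopped process $M^{\tau_\delta}=\{M_{t\wedge\tau_\delta}\}$ is a continuous local martingale whose quadratic variation is uniformly bounded. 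This is exactly the point at which $M^{\tau_\delta}$ upgrades from a local martingale to a true, square-integrable martingale, with $\mathbb E[(M^{\tau_\delta}_T)^2]=\mathbb E[\langle M\rangle_{T\wedge\tau_\delta}]\le\delta^2$.

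Next I would decompose the target event. On $\{\tau_\delta>T\}$ one has $M_t=M^{\tau_\delta}_t$ for all $t\le T$, whence
\[
\Big\{\sup_{0\le t\le T}|M_t|>\varepsilon\Big\}\subseteq \Big\{\sup_{0\le t\le T}|M^{\tau_\delta}_t|>\varepsilon\Big\}\cup\{\tau_\delta\le T\}.
\]
The continuity and monotonicity of $\langle M\rangle$ give the identification $\{\tau_\delta\le T\}=\{\langle M\rangle_T\ge\delta^2\}=\{\langle M\rangle_T^{1/2}\ge\delta\}$, which produces the second term $\mathbb P(\langle M\rangle_T^{1/2}>\delta)$ in the claimed bound (the harmless distinction between $\ge$ and $>$ is absorbed by monotonicity in $\delta$).

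For the first term I would apply Doob's weak-type maximal inequality to the nonnegative submartingale $|M^{\tau_\delta}|$:
\[
\mathbb P\Big(\sup_{0\le t\le T}|M^{\tau_\delta}_t|>\varepsilon\Big)\le \frac{1}{\varepsilon}\,\mathbb E\big[|M^{\tau_\delta}_T|\big]\le \frac{1}{\varepsilon}\Big(\mathbb E\big[(M^{\tau_\delta}_T)^2\big]\Big)^{1/2}=\frac{1}{\varepsilon}\Big(\mathbb E\big[\langle M\rangle_{T\wedge\tau_\delta}\big]\Big)^{1/2}\le\frac{\delta}{\varepsilon},
\]
using Cauchy--Schwarz and the bound $\langle M\rangle_{T\wedge\tau_\delta}\le\delta^2$. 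Summing the two contributions already yields the inequality with constant $1$, which a fortiori gives the stated bound with the (non-sharp) constant $3$; alternatively, one may invoke the general Lenglart--Rebolledo domination inequality of \cite{MR1022664} applied to the pair $(|M_t|,\langle M\rangle_t^{1/2})$, in which formulation the factor $3$ appears directly.

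The only genuinely delicate point --- and the main obstacle --- is the integrability bookkeeping forced by $M$ being merely a local martingale: without control one cannot assert $\mathbb E[|M_T|]<\infty$, nor the isometry $\mathbb E[(M_\sigma)^2]=\mathbb E[\langle M\rangle_\sigma]$ used above. The stopping time $\tau_\delta$ is engineered precisely so that $\langle M\rangle_{\cdot\wedge\tau_\delta}\le\delta^2$, rendering $M^{\tau_\delta}$ square-integrable and the optional-stopping/isometry identity legitimate; once this is secured, the remaining steps (the event decomposition, Doob's inequality, and Cauchy--Schwarz) are entirely routine.
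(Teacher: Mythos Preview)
Your argument is correct and in fact yields the sharper constant $1$ in place of $3$. The paper does not supply its own proof of this lemma: it is stated with a citation to \cite[\RNum{1}.9 Theorem 3]{MR1022664} and used as a black box in Section~\ref{sec4}. Your self-contained approach via the stopping time $\tau_\delta=\inf\{t:\langle M\rangle_t\ge\delta^2\}$, the event decomposition, and Doob's weak-type inequality combined with the $L^2$-isometry for the stopped martingale is the standard direct route; the only subtlety you correctly flag is that boundedness of $\langle M^{\tau_\delta}\rangle$ upgrades $M^{\tau_\delta}$ to a genuine square-integrable martingale. Your remark on replacing $\ge$ by $>$ via monotonicity in $\delta$ (apply the bound at $\delta'>\delta$ and let $\delta'\searrow\delta$) is also sound, and in any case the applications in Theorem~\ref{order1inprothm} use $\ge$ on the quadratic-variation side.
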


To end this section, we introduce the Burkholder inequality in martingale type 2 Banach space $L^{q}(\mathbb{T}^2)$, $2\leq q<\infty$ (see \cite[Proposition 2.4]{MR2433958}). Denote by $R(\mathcal{E}, L^{q}(\mathbb{T}^2))$ the space of $\gamma$-radonifying operators from $\mathcal{E}$ into $L^{q}(\mathbb{T}^2)$, equipped with the norm
$$
\|\Psi\|_{R(\mathcal{E},  L^{q}(\mathbb{T}^2))}=\Big(\widetilde{\mathbb{E}}\Big|\sum_{k \in \mathbb{Z}_{0}^{2}} \gamma_{k} \Psi e_{k}\Big|_{q}^{2}\Big)^{\frac{1}{2}}, \quad \Psi\in R(\mathcal{E}, L^{q}(\mathbb{T}^2)),
$$
 where $\left\{\gamma_{k}\right\}_{k \in \mathbb{Z}_0^{2}}$ is a sequence of independent standard normal variables on another probability space $(\widetilde{\Omega}, \widetilde{\mathscr{F}},\widetilde{\mathbb{P}})$, and $\widetilde{\mathbb{E}}$ is the expectation with respect to $\widetilde{\mathbb{P}}$. For any $\{\mathcal{F}_t\}_{ t\in[0,T]}$-adapted process $\phi\in L^{p}\left(\Omega ; L^{2}([0, T] ; R(\mathcal{E}, L^{q}(\mathbb{T}^2)))\right)$ and some $C_{p,q}>0$,
\begin{align}
\left\|\sup _{t \in[0, T]}\left| \int_{0}^{t} \phi(r) \mathrm{d} W^{E}(r)\right|_{q}\right\|_{L^{p}(\Omega)}^p  \leq C_{p, q}\mathbb{E}\left(\int_{0}^{T}\|\phi(t)\|_{R(\mathcal{E},  L^{q}(\mathbb{T}^2))}^{2} \mathrm{~d} t\right)^{\frac{p}{2}}, \label{BDG}
\end{align}
where the process $W^{E}:=\sum_{k \in \mathbb{Z}_{0}^{2}} \tilde{\beta}_{k} e_{k}$ is a cylindrical Wiener process on $\mathcal{E}$.
Moreover, the norm $\|\cdot\|_{R(\mathcal{E},  L^{q}(\mathbb{T}^2))}$ has the following estimate (see, e.g., \cite[Lemma 2.1]{MR2433958}): for some $C_{q}>0$,
\begin{align}
\|\Psi\|_{R(\mathcal{E},  L^{q}(\mathbb{T}^2))}^{2} \leqslant C_{q}\Big|\sum_{k \in \mathbb{Z}_{0}^{2}}\left(\Psi e_{k}\right)^{2}\Big|_{\frac{q}{2}}, \quad \Psi \in R(\mathcal{E},  L^{q}(\mathbb{T}^2)). \label{norm}
\end{align}

\section{Splitting semi-implicit Euler method} \label{sec3}
In this section, we investigate the unique solvability for the splitting SIE method \eqref{method} and the regularity estimates for the associated numerical solutions. Throughout this paper, we suppose that the initial datum $\xi(0)$ is $\mathcal F_0$-adapted. 
For fixed $\xi\in \mathcal{E}$ and $\tau>0$, let $\Phi_{\tau}(\xi)$ be determined by
\begin{align} 
\Phi_{\tau}(\xi)=\xi-\tau(K*\xi)\cdot \nabla \Phi_{\tau}(\xi). \label{Phi}
\end{align}
By \eqref{Phi}, the numerical solution of the splitting SIE method \eqref{method} can be rewritten as 
\begin{align}
\xi_{0}=\xi(0), \ \xi_{i+1}=\Phi_{\tau}(\xi_{i})+\Delta W^{curl}_{i+1}, \ 0\leq i\le n-1. \label{numericalsol}
\end{align}
Then we obtain an $\{\mathcal{F}_{t_i}\}_{i=0}^n$-adapted sequence $\{\xi_i\}_{i=0}^n$. Formally, $u_{i}=K*\xi_{i}$ and $\bm \pi_{i}=\nabla^{-1}(\mathcal{P}-{\rm Id})(u_{i} \cdot \nabla) u_{i}$ are approximations of $u(t_{i})$ and $\bm\pi(t_{i})$, respectively. In the
sequel, denote $\kappa_n(t):=t_{i}$ and $\eta_n(t):=t_{i+1}$ if $t\in[t_{i},t_{i+1})$, $i=0,1,\ldots,n-1$.

We first present the existence of a weak solution of \eqref{Phi} by a Galerkin approximation argument. In the proof we will need the following lemma which is an easy consequence of the Brouwer fixed point theorem but a useful tool to prove the existence of a solution for a finite dimensional system. 
\begin{lem}\cite[Lemma \RNum{2}. 1.4]{MR0609732}\label{finitefix}
Let $X$ be a finite dimensional Hilbert space with scalar product $\langle\cdot, \cdot\rangle_X$ and norm $\|\cdot\|_X$, and let $P$ be a continuous mapping from $X$ into itself such that
$$
\langle P(x), x\rangle_X>0,\ \forall \ \|x\|_X=c,
$$
for some $c>0$. Then there exists $x \in X,\|x\|_X \leq c,$ such that $ P(x)=0.$
\end{lem}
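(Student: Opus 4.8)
The plan is to argue by contradiction and reduce the claim to the Brouwer fixed point theorem, which asserts that every continuous self-map of a compact convex subset of a finite dimensional space has a fixed point. Since $X$ is finite dimensional, the closed ball $\bar{B}_c:=\{x\in X:\|x\|_X\le c\}$ is compact and convex, so it is an admissible domain for Brouwer's theorem.

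First I would suppose that the conclusion fails, i.e. that $P(x)\neq 0$ for every $x\in\bar{B}_c$. Because $P$ is continuous and never vanishes on $\bar{B}_c$, the normalized map
$$g(x):=-c\,\frac{P(x)}{\|P(x)\|_X}$$
is well-defined and continuous on $\bar{B}_c$, and it takes values in the sphere $\{y\in X:\|y\|_X=c\}\subset\bar{B}_c$. Hence $g$ is a continuous self-map of the compact convex set $\bar{B}_c$, and Brouwer's theorem yields a fixed point $x^\ast=g(x^\ast)$. Since $g$ always returns a vector of norm $c$, this fixed point automatically satisfies $\|x^\ast\|_X=\|g(x^\ast)\|_X=c$.

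The final step is a direct sign computation at this fixed point:
$$\langle P(x^\ast),x^\ast\rangle_X=\langle P(x^\ast),g(x^\ast)\rangle_X=-c\,\frac{\|P(x^\ast)\|_X^{2}}{\|P(x^\ast)\|_X}=-c\,\|P(x^\ast)\|_X<0,$$
which contradicts the hypothesis $\langle P(x),x\rangle_X>0$ valid for all $x$ with $\|x\|_X=c$. Therefore the assumption that $P$ has no zero in $\bar{B}_c$ is untenable, and there must exist $x\in X$ with $\|x\|_X\le c$ and $P(x)=0$.

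The only delicate point will be verifying that $g$ is genuinely a continuous self-map of a compact convex set so that Brouwer applies; this rests entirely on the contradiction hypothesis $P\neq 0$, which guarantees both a nonzero denominator and, together with the continuity of $P$ and of the norm, the continuity of $g$. Everything else is a routine sign computation, so I do not anticipate any substantial obstacle beyond this well-definedness check.
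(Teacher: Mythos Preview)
Your proof is correct and is exactly the classical argument from Temam's book that the paper cites; the paper itself does not reproduce the proof but merely remarks that the lemma is ``an easy consequence of the Brouwer fixed point theorem,'' which is precisely the route you take.
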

\begin{tho} \label{existence}
For fixed $\xi\in \mathcal{E}$ and $\tau>0$, there is a solution $\Phi_{\tau}(\xi)\in \mathcal{E}$ of Eq. \eqref{Phi} in the sense that
$$
\langle\Phi_{\tau}(\xi),\zeta \rangle=\langle\xi,\zeta \rangle+\tau\langle (K*\xi)\cdot \nabla\zeta,\Phi_{\tau}(\xi) \rangle, \quad \forall \ \zeta\in W^{1,4}(\mathbb{T}^2)\cap \mathcal{E}.
$$
\end{tho}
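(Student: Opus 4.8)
The plan is to set up a Galerkin approximation in the finite-dimensional spaces spanned by the orthonormal basis $\{e_k\}_{k\in\mathbb{Z}_0^2}$, solve the projected nonlinear equation via Lemma \ref{finitefix}, and then pass to the limit using a uniform bound together with the Banach--Alaoglu theorem. Concretely, for each $N$ let $\mathcal{E}_N=\mathrm{span}\{e_k: 0<|k|\le N\}$ and let $P_N:\mathcal{E}\to\mathcal{E}_N$ be the $L^2$-orthogonal projection. I would seek $\Phi^N\in\mathcal{E}_N$ solving the Galerkin system
\begin{align*}
\langle \Phi^N,\zeta\rangle = \langle \xi,\zeta\rangle + \tau\langle (K*\xi)\cdot\nabla\zeta,\Phi^N\rangle,\qquad \forall\,\zeta\in\mathcal{E}_N.
\end{align*}

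First I would establish existence of $\Phi^N$. Define the map $P:\mathcal{E}_N\to\mathcal{E}_N$ by $\langle P(v),\zeta\rangle = \langle v-\xi,\zeta\rangle - \tau\langle (K*\xi)\cdot\nabla\zeta, v\rangle$ for all $\zeta\in\mathcal{E}_N$; this is well-defined and continuous on the finite-dimensional space $\mathcal{E}_N$. The key computation is to test with $\zeta=v$ itself: the crucial observation is the orthogonality property $\langle (K*\xi)\cdot\nabla v, v\rangle = 0$, valid because $K*\xi$ is divergence free (stated after \eqref{vorticity}), so the nonlinear term drops out and
\begin{align*}
\langle P(v),v\rangle = |v|_2^2 - \langle \xi,v\rangle \ge |v|_2^2 - |\xi|_2\,|v|_2,
\end{align*}
which is strictly positive as soon as $|v|_2 = c > |\xi|_2$. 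Lemma \ref{finitefix} then yields $\Phi^N\in\mathcal{E}_N$ with $P(\Phi^N)=0$ and, from the same coercivity estimate, the uniform bound $|\Phi^N|_2\le |\xi|_2$ independent of $N$ and $\tau$.

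Next I would pass to the limit. By the uniform $L^2$ bound, $\{\Phi^N\}$ is bounded in $\mathcal{E}$, so by Banach--Alaoglu there is a subsequence converging weakly to some $\Phi_\tau(\xi)\in\mathcal{E}$ with $|\Phi_\tau(\xi)|_2\le|\xi|_2$. For a fixed test function $\zeta\in W^{1,4}(\mathbb{T}^2)\cap\mathcal{E}$, I would first verify the identity for $\zeta\in\bigcup_N\mathcal{E}_N$: given such a $\zeta$, it lies in $\mathcal{E}_{N_0}$ for some $N_0$, hence is an admissible test function for all $N\ge N_0$, and weak convergence of $\Phi^N$ passes to the limit in both the linear term $\langle\Phi^N,\zeta\rangle$ and the bilinear term $\tau\langle(K*\xi)\cdot\nabla\zeta,\Phi^N\rangle$, since $(K*\xi)\cdot\nabla\zeta\in L^2(\mathbb{T}^2)$ is a fixed $L^2$ pairing against the weakly convergent sequence. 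This gives the variational identity for all finite combinations of basis functions.

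The main obstacle is upgrading the test space from $\bigcup_N\mathcal{E}_N$ to all of $W^{1,4}(\mathbb{T}^2)\cap\mathcal{E}$ by density. To close this I need the two pairings $\zeta\mapsto\langle\Phi_\tau(\xi),\zeta\rangle$ and $\zeta\mapsto\langle(K*\xi)\cdot\nabla\zeta,\Phi_\tau(\xi)\rangle$ to be continuous on $W^{1,4}(\mathbb{T}^2)\cap\mathcal{E}$. The first is clearly continuous since $\Phi_\tau(\xi)\in L^2$. For the second, the natural estimate is
\begin{align*}
|\langle (K*\xi)\cdot\nabla\zeta,\Phi_\tau(\xi)\rangle| \le |K*\xi|_4\,|\nabla\zeta|_2\,|\Phi_\tau(\xi)|_4,
\end{align*}
where $|K*\xi|_4\le C|\xi|_2$ by \eqref{K4}; the delicate point is the factor $|\Phi_\tau(\xi)|_4$, which is not controlled by the $L^2$ bound alone. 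I expect to handle this either by a Hölder splitting that keeps $\zeta$ in $W^{1,4}$ and uses only $|\Phi_\tau(\xi)|_2$ (pairing $|K*\xi|_4\,|\nabla\zeta|_4\,|\Phi_\tau(\xi)|_2$ via the boundedness $|K*\xi|_4\le C|\xi|_2$ and the embedding $W^{1,4}\hookrightarrow W^{1,2}$), thereby making the functional bounded on $W^{1,4}\cap\mathcal{E}$, so that the density of $\bigcup_N\mathcal{E}_N$ in $W^{1,4}\cap\mathcal{E}$ lets me extend the identity to the full test space. This density and the continuity of the bilinear form in the $W^{1,4}$ topology of the test function are the two technical facts I would need to nail down carefully.
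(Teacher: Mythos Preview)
Your proof is correct and follows essentially the same Galerkin / Brouwer fixed point / Banach--Alaoglu route as the paper. The only cosmetic differences are that the paper also projects $\xi$ in the approximate equation (using $K*\Pi_{\Lambda_N}\xi$ rather than $K*\xi$) and passes to the limit directly for arbitrary $\zeta\in W^{1,4}\cap\mathcal{E}$ via $\Pi_{\Lambda_N}\zeta\to\zeta$ in $W^{1,4}$, whereas you take the equivalent two-step route of first testing against trigonometric polynomials and then extending by density using the H\"older bound $|\langle (K*\xi)\cdot\nabla\zeta,\Phi_\tau(\xi)\rangle|\le |K*\xi|_4|\nabla\zeta|_4|\Phi_\tau(\xi)|_2$.
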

\begin{proof}
Let $\Lambda_{N}:=\{k\in\mathbb{Z}_{0}^{2}:|k_{1}|\vee |k_{2}|\leq N\}$ be a finite subset of $\mathbb{Z}_{0}^{2}$ and $\mathcal{E}_{\Lambda_{N}}=\operatorname{span}\left\{e_{k}: k \in \Lambda_{N}\right\}$. Define the projection operator $\Pi_{\Lambda_{N}}: \mathcal{E} \rightarrow \mathcal{E}_{\Lambda_{N}}$ as
$$
\Pi_{\Lambda_{N}} \zeta =\sum_{k \in \Lambda_{N}}\left\langle \zeta, e_{k}\right\rangle e_{k}, \quad \zeta  \in \mathcal{E}.
$$
Now we consider the finite dimensional approximation of Eq. (\ref{Phi}):
\begin{align} 
\Phi_{\tau}^{N}(\xi)=\Pi_{\Lambda_{N}}\xi-\tau\Pi_{\Lambda_{N}}[(K*\Pi_{\Lambda_{N}}\xi)\cdot \nabla\Phi_{\tau}^{N}(\xi)].  \label{finitedim}
\end{align} 
Let $P:\mathcal{E}_{\Lambda_{N}}\to\mathcal{E}_{\Lambda_{N}}$ be defined by
\begin{align*} 
P(x)=-\Pi_{\Lambda_{N}}\xi+\tau\Pi_{\Lambda_{N}}[(K*\Pi_{\Lambda_{N}}\xi)\cdot \nabla x]+x, \ \forall \ x\in \mathcal{E}_{\Lambda_{N}}.
\end{align*} 
Obviously $P$ is a linear map from $\mathcal{E}_{\Lambda_{N}}$ to $\mathcal{E}_{\Lambda_{N}}$, thus it's continuous. For any $c> |\Pi_{\Lambda_{N}}\xi|_2\ge0$, we have
\begin{align*}
\langle P(x),x\rangle=\frac{1}{2}(|x|_2^2-|\Pi_{\Lambda_{N}}\xi|_2^2+|x-\Pi_{\Lambda_{N}}\xi|_2^2)>0, \ \forall\ |x|_2=c.
\end{align*}
By Lemma \ref{finitefix}, we know that there exists a $\Phi_{\tau}^N(\xi)\in \{x\in\mathcal{E}_{\Lambda_{N}}: |x|_2\le c\}$ such that $\Phi_{\tau}^N(\xi)$ satisfies \eqref{finitedim}.
If $\Psi_{\tau}^N(\xi)\in \mathcal{E}_{\Lambda_{N}}$ is another solution of \eqref{finitedim}, then
\begin{align*}
|\Phi_{\tau}^N(\xi)-\Psi_{\tau}^N(\xi)|_2^2=\tau\langle K*\xi\cdot \nabla(\Phi_{\tau}^N(\xi)-\Psi_{\tau}^N(\xi)),\Phi_{\tau}^N(\xi)-\Psi_{\tau}^N(\xi)\rangle=0.
\end{align*}
Therefore, Eq. (\ref{finitedim}) has a unique solution $\Phi_{\tau}^{N}(\xi)\in \mathcal{E}_{\Lambda_{N}}$. Notice that
\begin{align*}
|\Phi_{\tau}^{N}(\xi)|_{2}^{2}&=\langle\Pi_{\Lambda_{N}}\xi,\Phi_{\tau}^{N}(\xi)\rangle-\langle\tau\Pi_{\Lambda_{N}}[(K*\Pi_{\Lambda_{N}}\xi)\cdot \nabla\Phi_{\tau}^{N}(\xi)],\Phi_{\tau}^{N}(\xi)
\rangle\\
&=\langle\Pi_{\Lambda_{N}}\xi,\Phi_{\tau}^{N}(\xi)\rangle-\tau \langle (K*\Pi_{\Lambda_{N}}\xi)\cdot \nabla\Phi_{\tau}^{N}(\xi),\Pi_{\Lambda_{N}}\Phi_{\tau}^{N}(\xi)\rangle\\
&=\langle\Pi_{\Lambda_{N}}\xi,\Phi_{\tau}^{N}(\xi)\rangle\\
&\leq |\Pi_{\Lambda_{N}}\xi|_{2}|\Phi_{\tau}^{N}(\xi)|_{2},
\end{align*}
which implies
\begin{align*}
|\Phi_{\tau}^{N}(\xi)|_{2}\leq |\Pi_{\Lambda_{N}}\xi|_{2}\leq |\xi|_{2}.
\end{align*}
By Banach-Alaoglu theorem, we obtain that there is a subsequence $\{\Phi_{\tau}^{N_{l}}(\xi)\}_{l\in \mathbb{N}}$ of $\{\Phi_{\tau}^{N}(\xi)\}_{N\in \mathbb{N}}$ and a $\Phi_{\tau}(\xi)\in \mathcal{E}$ such that for any $\zeta \in \mathcal{E}$,
\begin{align*}
\langle\Phi_{\tau}^{N_{l}}(\xi),\zeta \rangle \to \langle\Phi_{\tau}(\xi),\zeta \rangle \text{ as } l\to \infty.
\end{align*}
On the other hand, for any $\zeta\in W^{1,4}(\mathbb{T}^2)\cap \mathcal{E}$,
\begin{align*}
\langle\Phi_{\tau}^{N_{l}}(\xi),\zeta \rangle &=\langle \Pi_{\Lambda_{N_{l}}}\xi,\zeta \rangle+\tau\langle (K*\Pi_{\Lambda_{N_{l}}}\xi)\cdot \nabla\Pi_{\Lambda_{N_{l}}}\zeta,\Phi_{\tau}^{N_{l}}(\xi) \rangle\\
&\to \langle\xi,\zeta \rangle+\tau\langle (K*\xi)\cdot \nabla\zeta,\Phi_{\tau}(\xi) \rangle,\text{ as } l\to \infty.
\end{align*}
Hence, there is a weak solution $\Phi_{\tau}(\xi)\in \mathcal{E}$ of Eq. \eqref{Phi}.
\end{proof}

In preparation for proving the existence and uniqueness of strong solution for \eqref{Phi}, we give the following a priori estimate, which shows that $\Phi_{\tau}:L^{p}(\mathbb{T}^2)\to L^{p}(\mathbb{T}^2)$ is bounded.
\begin{lem} \label{priest}
Let $1\le p\le\infty$ and $\xi \in L^{p}(\mathbb{T}^2)\cap \mathcal{E}$. If Eq. (\ref{Phi}) has a strong solution $\Phi_{\tau}(\xi)$ for some fixed $\tau>0$, then
$$|\Phi_{\tau}(\xi)|_{p}\leq |\xi|_{p}.$$ 
\end{lem}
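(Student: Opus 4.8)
The plan is to test the identity \eqref{Phi}, which I rewrite as $\Phi_\tau(\xi)+\tau(K*\xi)\cdot\nabla\Phi_\tau(\xi)=\xi$, against the multiplier $|\Phi_\tau(\xi)|^{p-2}\Phi_\tau(\xi)$ and to use the divergence free property of the Biot--Savart velocity to eliminate the transport term, exactly as the orthogonality $\langle(K*\xi)\cdot\nabla\zeta,\zeta\rangle=0$ was exploited for $p=2$ in the proof of Theorem \ref{existence}. Writing $\phi:=\Phi_\tau(\xi)$ and $v:=K*\xi$, so that $\nabla\cdot v=0$, I would for $1<p<\infty$ first record the cancellation
\begin{align*}
\int_{\mathbb{T}^2}(v\cdot\nabla\phi)\,|\phi|^{p-2}\phi\,\mathrm{d}x=\frac1p\int_{\mathbb{T}^2}v\cdot\nabla(|\phi|^p)\,\mathrm{d}x=-\frac1p\int_{\mathbb{T}^2}(\nabla\cdot v)\,|\phi|^p\,\mathrm{d}x=0,
\end{align*}
where the first equality is the chain rule and the second is integration by parts on the torus, the periodicity of $\phi$ and $v$ ruling out boundary terms. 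Pairing the rewritten \eqref{Phi} with $|\phi|^{p-2}\phi$ and inserting this identity gives
\begin{align*}
|\phi|_p^p=\langle\xi,|\phi|^{p-2}\phi\rangle\le|\xi|_p\,\big|\,|\phi|^{p-1}\big|_{p'}=|\xi|_p\,|\phi|_p^{p-1},
\end{align*}
by Hölder's inequality with conjugate exponent $p'=p/(p-1)$. Dividing by $|\phi|_p^{p-1}$, the case $|\phi|_p=0$ being trivial, yields $|\phi|_p\le|\xi|_p$ for every $1<p<\infty$.

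The endpoints $p=1$ and $p=\infty$ I would treat separately by limiting arguments. Since $\mathbb{T}^2$ carries a probability measure, $\lim_{p\to\infty}|\phi|_p=|\phi|_\infty$ and $\lim_{p\to\infty}|\xi|_p=|\xi|_\infty$, so passing to the limit $p\to\infty$ in the bound just obtained gives $|\phi|_\infty\le|\xi|_\infty$ whenever $\xi\in L^\infty(\mathbb{T}^2)$ (this also shows $\phi\in L^\infty$). For $p=1$ the limit $p\to1^{+}$ is useless, since $|\phi|_1\le|\phi|_p$ on a probability space; instead the multiplier degenerates to $\mathrm{sgn}(\phi)$, and I would repeat the computation with the smooth approximation $\phi/\sqrt{\phi^2+\varepsilon^2}$, observing that $v\cdot\nabla\sqrt{\phi^2+\varepsilon^2}$ again integrates to zero because $\nabla\cdot v=0$, and then let $\varepsilon\to0$ by dominated convergence to reach $|\phi|_1=\langle\xi,\mathrm{sgn}(\phi)\rangle\le|\xi|_1$.

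The formal computation is short, so the main obstacle is to justify it at the regularity of a strong solution: one must check that $|\phi|^{p-2}\phi$ is an admissible test function, that the integrals above are finite, and that both the chain rule and the integration by parts are legitimate. By Lemma \ref{Kbound} the velocity $v=K*\xi$ lies in $W^{1,p}(\mathbb{T}^2)$, and a strong solution $\phi$ belongs to $W^{1,p}(\mathbb{T}^2)\cap\mathcal{E}$, with $|\phi|^{p-2}\phi\in L^{p'}(\mathbb{T}^2)$. The chain-rule identity $|\phi|^{p-2}\phi\,\nabla\phi=\frac1p\nabla(|\phi|^p)$ holds in the Sobolev sense because $t\mapsto|t|^p$ is $C^1$ for $p>1$, and the vanishing of $\int_{\mathbb{T}^2}v\cdot\nabla(|\phi|^p)\,\mathrm{d}x$ is obtained by approximating $\phi$ and $v$ by smooth periodic functions and using $\nabla\cdot v=0$, no boundary contribution arising from periodicity. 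Once this is in place for $1<p<\infty$, the two endpoint arguments above extend the estimate to the full range $1\le p\le\infty$, completing the proof.
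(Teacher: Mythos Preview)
Your proposal is correct and follows essentially the same route as the paper: test \eqref{Phi} against $|\Phi_\tau(\xi)|^{p-2}\Phi_\tau(\xi)=|\Phi_\tau(\xi)|^{p-1}\mathrm{sign}(\Phi_\tau(\xi))$, use $\nabla\cdot(K*\xi)=0$ and integration by parts to kill the transport term, apply H\"older, and recover $p=\infty$ by letting $p\to\infty$. The only cosmetic difference is that the paper treats all $1\le p<\infty$ at once with the multiplier $|\Phi_\tau(\xi)|^{p-1}\mathrm{sign}(\Phi_\tau(\xi))$ (so $p=1$ is included directly via the Sobolev chain rule $\nabla|\phi|=\mathrm{sign}(\phi)\nabla\phi$), whereas you single out $p=1$ with an $\varepsilon$-regularization; both lead to the same conclusion.
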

\begin{proof}
We first treat the case $1\le p<\infty$. By multiplying $ |\Phi_{\tau}(\xi)|^{p-1}\mathrm{sign}(\Phi_{\tau}(\xi))$ on both sides of Eq. \eqref{Phi} and then integrating over $\mathbb{T}^2$, we obtain
\begin{align*}
|\Phi_{\tau}(\xi)|_{p}^{p}=&\int_{\mathbb{T}^2}\mathrm{sign}(\Phi_{\tau}(\xi)) |\Phi_{\tau}(\xi)|^{p-1}[\xi-\tau(K*\xi) \cdot \nabla\Phi_{\tau}(\xi)]\mathrm{d}x\\
=&\int_{\mathbb{T}^2}\mathrm{sign}(\Phi_{\tau}(\xi))\xi|\Phi_{\tau}(\xi)|^{p-1} \mathrm{d}x-\frac{\tau}{p}\int_{\mathbb{T}^2}(K*\xi) \cdot \nabla|\Phi_{\tau}(\xi)|^{p}\mathrm{d}x\\
=&\int_{\mathbb{T}^2}\mathrm{sign}(\Phi_{\tau}(\xi))\xi|\Phi_{\tau}(\xi)|^{p-1} \mathrm{d}x+\frac{\tau}{p}\int_{\mathbb{T}^2}[\nabla\cdot(K*\xi)] |\Phi_{\tau}(\xi)|^{p}\mathrm{d}x\\
=&\int_{\mathbb{T}^2}\mathrm{sign}(\Phi_{\tau}(\xi))\xi|\Phi_{\tau}(\xi)|^{p-1} \mathrm{d}x,
\end{align*}
where we have used the integration by parts and the fact that $ K*\xi$ is divergence free. Using H\"older inequality, we have
\begin{align*}
|\Phi_{\tau}(\xi)|_{p}\leq |\xi|_{p}.
\end{align*}
Letting $p\to\infty$, we complete the proof.
\end{proof}

The next lemma gives the existence and uniqueness of strong solution for \eqref{Phi}.
\begin{lem}
For $\xi\in W^{1,4}(\mathbb{T}^2)\cap \mathcal{E}$ and sufficiently small step size $\tau>0$, Eq. \eqref{Phi} has a strong unique solution $\Phi_{\tau}(\xi)\in W^{1,4}(\mathbb{T}^2)\cap \mathcal{E}$. 
\end{lem}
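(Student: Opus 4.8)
The plan is to split the statement into uniqueness, which is elementary, and existence, which I would obtain from an a priori $W^{1,4}$-bound realized through the Galerkin scheme already set up in the proof of Theorem~\ref{existence}. For \emph{uniqueness}, if $\phi_1,\phi_2\in W^{1,4}(\mathbb{T}^2)\cap\mathcal{E}$ both solve \eqref{Phi} for the same $\xi$, then $\psi:=\phi_1-\phi_2$ satisfies $\psi+\tau(K*\xi)\cdot\nabla\psi=0$; since $\xi\in W^{1,4}$ forces $K*\xi\in W^{2,4}(\mathbb{T}^2)\hookrightarrow C^1$, every product is in $L^2$, and testing against $\psi$ gives $|\psi|_2^2+\tfrac{\tau}{2}\int_{\mathbb{T}^2}(K*\xi)\cdot\nabla(\psi^2)\,\mathrm dx=|\psi|_2^2=0$, the integral vanishing because $K*\xi$ is divergence free. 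No smallness of $\tau$ is needed here.

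For the \emph{a priori estimate}, write $u=K*\xi$ and $w=\nabla\Phi_\tau(\xi)$; differentiating \eqref{Phi} yields $w+\tau u\cdot\nabla w+\tau(\nabla u)w=\nabla\xi$. Testing against $|w|^2w$, the top-order transport term cancels, $\int_{\mathbb{T}^2}(u\cdot\nabla w)\cdot|w|^2w\,\mathrm dx=\int_{\mathbb{T}^2}u\cdot\nabla(|w|^4/4)\,\mathrm dx=0$, by $\nabla\cdot u=0$, and what remains is
\[
|w|_4^4\le\tau|\nabla u|_\infty\,|w|_4^4+|\nabla\xi|_4\,|w|_4^3.
\]
By Lemma~\ref{Kbound}, applied to $\xi$ and to each $\partial_j\xi$, one has $u\in W^{2,4}(\mathbb{T}^2)$, hence $\nabla u\in W^{1,4}\hookrightarrow L^\infty$ with $|\nabla u|_\infty\le C\|\xi\|_{1,4}$. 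Thus, as soon as $\tau<(C\|\xi\|_{1,4})^{-1}$, the first term is absorbed and $|\nabla\Phi_\tau(\xi)|_4\le|\nabla\xi|_4/(1-\tau|\nabla u|_\infty)$; this is what fixes the smallness of $\tau$ in the statement.

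For \emph{existence} I would carry out the estimate above on the smooth Galerkin approximants $\Phi_\tau^N(\xi)$ of \eqref{finitedim}, uniformly in $N$. A uniform $W^{1,4}$-bound then yields, by reflexivity and the Banach--Alaoglu theorem, a subsequence converging weakly in $W^{1,4}$, whose limit agrees with the weak solution $\Phi_\tau(\xi)$ of Theorem~\ref{existence} (weak $W^{1,4}$-convergence implies weak $L^2$-convergence, so the two weak limits coincide); consequently $\Phi_\tau(\xi)\in W^{1,4}(\mathbb{T}^2)\cap\mathcal{E}$. Since then $u\cdot\nabla\Phi_\tau(\xi)\in L^4$, the identity \eqref{Phi} holds in $L^4(\mathbb{T}^2)$, i.e. the weak solution is in fact a strong solution. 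The $L^\infty$-bound $|\Phi_\tau(\xi)|_\infty\le|\xi|_\infty$ from Lemma~\ref{priest} is a convenient auxiliary input.

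I expect the main obstacle to be precisely the $L^4$-estimate at the Galerkin level: the natural test function $|\nabla\Phi_\tau^N(\xi)|^2\nabla\Phi_\tau^N(\xi)$ does not lie in $\mathcal{E}_{\Lambda_N}$, so the truncation $\Pi_{\Lambda_N}$ in \eqref{finitedim} no longer drops out and the exact cancellation of the transport term is spoiled by a commutator of the type $\langle u\cdot\nabla w_i,(\mathrm{Id}-\Pi_{\Lambda_N})(|w|^2w_i)\rangle$. The analogous $H^1$-estimate is clean, because there the test function $w_i=\partial_i\Phi_\tau^N(\xi)$ stays inside $\mathcal{E}_{\Lambda_N}$ and the cancellation is exact; I would use this as a stepping stone and control the $L^4$ commutator by exploiting the boundedness of the truncation on $L^p(\mathbb{T}^2)$, $1<p<\infty$, together with a commutator estimate.
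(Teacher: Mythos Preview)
Your approach is essentially the paper's: identical uniqueness argument, and the same a priori $W^{1,4}$ estimate obtained by differentiating \eqref{Phi}, testing against $\partial_i\Phi_\tau(\xi)\,|\nabla\Phi_\tau(\xi)|^2$, and using $\nabla\cdot(K*\xi)=0$ to kill the top-order transport term. One cosmetic difference: the paper bounds $|\nabla(K*\xi)|_\infty$ via Young's convolution inequality ($K\in L^{4/3}$, $\nabla\xi\in L^4$), whereas you go through Lemma~\ref{Kbound} and the embedding $W^{1,4}\hookrightarrow L^\infty$; both yield $|\nabla(K*\xi)|_\infty\le C\|\xi\|_{1,4}$.

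Where you diverge from the paper is in the \emph{justification} of the estimate. The paper performs the computation directly on $\Phi_\tau(\xi)$, treating it formally as though it already had enough regularity for the integrations by parts, and then simply writes that the resulting bound ``together with Theorem~\ref{existence} indicates that $\Phi_\tau(\xi)\in W^{1,4}(\mathbb{T}^2)\cap\mathcal{E}$ is the unique strong solution''. It does \emph{not} carry out the estimate at the Galerkin level, and hence never confronts the commutator $(\mathrm{Id}-\Pi_{\Lambda_N})(|w|^2w_i)$ that you correctly flag. In this sense your proposal is more scrupulous than the published proof: the obstacle you anticipate is real, but the paper does not address it either --- it takes the a priori estimate as established and moves on. Your suggested workaround (uniform $L^p$-boundedness of the rectangular Fourier truncation, $1<p<\infty$, plus a commutator bound) is a reasonable route if you want to close that gap; an alternative is to regularize $\xi$ rather than the equation and pass to the limit using the a priori bound.
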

\begin{proof}
{\bf{Uniqueness:}} If $\Psi_{\tau}(\xi)\in W^{1,4}(\mathbb{T}^2)\cap \mathcal{E}$ is another solution of \eqref{Phi}, then
\begin{align*}
|\Phi_{\tau}(\xi)-\Psi_{\tau}(\xi)|_2^2=\tau\langle K*\xi\cdot \nabla(\Phi_{\tau}(\xi)-\Psi_{\tau}(\xi)),\Phi_{\tau}(\xi)-\Psi_{\tau}(\xi)\rangle=0,
\end{align*}
and the uniqueness follows.

{\bf{Existence:}} Taking the partial derivative $\partial_{i}$ of (\ref{Phi}),
\begin{align} 
\partial_{i}\Phi_{\tau}(\xi)=\partial_{i}\xi-\tau\partial_{i}[(K*\xi)\cdot \nabla\Phi_{\tau}(\xi)], \quad i=1,2. \label{aaa}
\end{align} 
Multiplying \eqref{aaa} by $\partial_{i}\Phi_{\tau}(\xi)|\nabla \Phi_{\tau}(\xi)|^{2}$, summing over $i$, and  integrating over $\mathbb{T}^2$, we obtain
\begin{align*} 
|\nabla\Phi_{\tau}(\xi)|_{4}^{4}=&\sum_{i=1}^{2}\langle\partial_{i}\xi-\tau\partial_{i}[(K*\xi)\cdot \nabla\Phi_{\tau}(\xi)],\partial_{i}\Phi_{\tau}(\xi)|\nabla \Phi_{\tau}(\xi)|^{2}\rangle\\
\le&\frac{1}{4}|\nabla\xi|_{4}^{4} +\frac{3}{4}|\nabla \Phi_{\tau}(\xi)|_{4}^{4}-\tau\sum_{i=1}^{2}\langle\partial_{i}[(K*\xi)\cdot \nabla\Phi_{\tau}(\xi)],\partial_{i}\Phi_{\tau}(\xi)|\nabla \Phi_{\tau}(\xi)|^{2}\rangle,
\end{align*} 
where the sum is further decomposed into
\begin{align*} 
&\sum_{i}\langle\partial_{i}[(K*\xi)\cdot \nabla\Phi_{\tau}(\xi)],\partial_{i}\Phi_{\tau}(\xi)|\nabla \Phi_{\tau}(\xi)|^{2}\rangle\\
=&\sum_{i,j}\langle\partial_{i}(K*\xi)_{j}\partial_{j}\Phi_{\tau}(\xi),\partial_{i}\Phi_{\tau}(\xi)|\nabla \Phi_{\tau}(\xi)|^{2}\rangle\\
&+\sum_{i,j}\langle(K*\xi)_{j}\partial_{ij}\Phi_{\tau}(\xi),\partial_{i}\Phi_{\tau}(\xi)|\nabla \Phi_{\tau}(\xi)|^{2}\rangle\\
=&:A+B.
\end{align*} 
Note that for all $p \in[1,2), K \in L^{p}\left(\mathbb{T}^2\right)$ (see e.g. \cite{MR3947288}). By Young inequality for convolution,
\begin{align*} 
|A|\leq |\nabla(K*\xi)|_{\infty}|\nabla \Phi_{\tau}(\xi)|_{4}^{4}\leq C\|\xi\|_{1,4}|\nabla \Phi_{\tau}(\xi)|_{4}^{4}.
\end{align*} 
Using the integration by parts and the fact that $K*\xi$ is divergence free, we see that
\begin{align*} 
B=&\sum_{i,j}\int_{\mathbb{T}^2}(K*\xi)_{j}\partial_{ij}\Phi_{\tau}(\xi)\partial_{i}\Phi_{\tau}(\xi)|\nabla \Phi_{\tau}(\xi)|^{2}\ud x\\
=&-\sum_{i,j}\int_{\mathbb{T}^2}\partial_{j}(K*\xi)_{j}\partial_{i}\Phi_{\tau}(\xi)|\nabla \Phi_{\tau}(\xi)|^{2}\partial_{i}\Phi_{\tau}(\xi)\ud x\\
&-\sum_{i,j}\int_{\mathbb{T}^2}(K*\xi)_{j}\partial_{ij}\Phi_{\tau}(\xi)|\nabla \Phi_{\tau}(\xi)|^{2}\partial_{i}\Phi_{\tau}(\xi)\ud x\\
&-\sum_{i,j}\int_{\mathbb{T}^2}(K*\xi)_{j}\partial_{i}\Phi_{\tau}(\xi)\partial_{j}|\nabla \Phi_{\tau}(\xi)|^{2}\partial_{i}\Phi_{\tau}(\xi)\ud x\\
=&-B-\frac{1}{2}\sum_{j}\int_{\mathbb{T}^2}(K*\xi)_{j}\partial_{j}|\nabla \Phi_{\tau}(\xi)|^{4}\ud x\\
=&-B,
\end{align*} 
and therefore $B=0$. 
To summarize, we have shown
\begin{align} 
|\nabla\Phi_{\tau}(\xi)|_{4}^{4}\leq |\nabla\xi|_{4}^{4} +4\tau C\|\xi\|_{1,4}|\nabla \Phi_{\tau}(\xi)|_{4}^{4}. \label{gradientest}
\end{align} 
Combining Lemma \ref{priest} and (\ref{gradientest}), we have for sufficiently small step size $\tau>0$, 
\begin{align*} 
\|\Phi_{\tau}(\xi)\|_{1,4}\leq C(\tau,\xi)\|\xi\|_{1,4},
\end{align*} 
which together with Theorem \ref {existence}, indicates that $\Phi_{\tau}(\xi)\in W^{1,4}(\mathbb{T}^2)\cap \mathcal{E}$ is the unique strong solution of Eq. \eqref{Phi}.
\end{proof}

\begin{rem} \label{regularity}
Applying the SIE method to deterministic Naiver-Stokes equations yields
\begin{align*}
\Phi_{\tau}(\xi)=\xi+\tau\nu\Delta \Phi_{\tau}(\xi)-\tau(K*\xi)\cdot \nabla\Phi_{\tau}(\xi) ,\ \nu>0.
\end{align*}
 The integration by parts gives
\begin{align*}
|\Phi_{\tau}(\xi)|_2^2+2\tau\nu|\nabla \Phi_{\tau}(\xi)|_2^2\le |\xi|_2^2,
\end{align*}
and therefore 
\begin{align*}
\|\Phi_{\tau}(\xi)\|_1\le \frac{1}{ \min\{1, \sqrt{2\tau\nu } \}}|\xi|_2.
\end{align*}
Compared with \eqref{gradientest}, this estimate shows that the numerical analysis of Euler equations is different from that of Navier-Stokes equations due to the absence of Laplacian.
\end{rem}

The next proposition gives the uniform boundedness of the numerical solutions in $L^{p}(\Omega,L^{p}(\mathbb{T}^2))$ via the Burkholder inequality in martingale type 2 Banach spaces. Furthermore, the numerical solutions $\{\xi_i\}_{i=0}^n$ are shown to be bounded in $L^4(\mathbb{T}^2)$ a.s., and the bound is proportional to $\tau^{-\varepsilon}$ for any $\varepsilon>0$.
\begin{pro}\label{lm3.2}
Suppose that  $\xi(0)\in L^{p}(\Omega,L^{p}(\mathbb{T}^2))\cap\mathcal E$ for some even number $p\geq 2$ and  
$$\sum_{k \in \mathbb{Z}^{2}_{0}}|k|^{2}c_{-k}^{2}\left|e_{k}\right|_{p}^{2}<\infty.$$
Then for all $1\le q\le p$,
\begin{align}
\sup_{1\leq i\leq n}||\xi_{i}||_{L^{q}(\Omega,L^{q}(\mathbb{T}^2))}\leq C(p,T,\xi(0)). \label{unib}
\end{align}   
Morever, if $\xi(0)\in L^{p}(\Omega,L^{p}(\mathbb{T}^2))$ for all large enough $p\in\mathbb{N}$, then we have 
\begin{align}
\mathbb{E}\sup_{1\leq i\leq n}|\xi_{i}|_4^p\leq C(p,T,\xi(0)), \label{Esup}
\end{align}
and for any $\varepsilon>0$,
\begin{align*}
\sup_{1\leq i\leq n}|\xi_{i}|_4\leq C(T,\xi(0),\omega,\varepsilon)\tau^{-\varepsilon}, \ a.s.
\end{align*}
\end{pro}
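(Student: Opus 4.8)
The plan is to prove the three assertions in order, each feeding into the next. Throughout write $b_i:=\Delta W^{curl}_{i+1}$ for the noise increment: it is independent of $\mathcal F_{t_i}$, centred Gaussian, and by \eqref{BDG} applied to the constant integrand over $[t_i,t_{i+1}]$ together with \eqref{norm} and the summability hypothesis on $\{c_k\}$ it satisfies the increment moment bounds $\mathbb E|b_i|_4^r\le C_r\tau^{r/2}$ for every $r\ge1$.

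\emph{Step 1 (uniform $L^p(\Omega,L^p)$ bound \eqref{unib}).} Take $p$ even and expand, using \eqref{numericalsol} and the binomial formula,
\[
|\xi_{i+1}|_p^p=\int_{\mathbb T^2}\big(\Phi_\tau(\xi_i)+b_i\big)^p\,\mathrm dx=\sum_{j=0}^p\binom pj\int_{\mathbb T^2}\Phi_\tau(\xi_i)^{\,p-j}b_i^{\,j}\,\mathrm dx.
\]
Conditioning on $\mathcal F_{t_i}$, the term $j=1$ drops since the odd conditional moments of $b_i$ vanish, while for even $j\ge2$ one has $\mathbb E[b_i(x)^j\mid\mathcal F_{t_i}]=c_j(\mathrm{Var}\,b_i(x))^{j/2}$ with $\mathrm{Var}\,b_i(x)=4\pi^2\tau\sum_k|k|^2c_{-k}^2e_k(x)^2$, so $|\mathrm{Var}\,b_i|_{p/2}\le C\tau\sum_k|k|^2c_{-k}^2|e_k|_p^2\le C\tau$. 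Estimating each cross term by Hölder as $\big|\int\Phi_\tau(\xi_i)^{p-j}\,\mathbb E[b_i^j\mid\mathcal F_{t_i}]\big|\le C|\Phi_\tau(\xi_i)|_p^{\,p-j}\tau^{j/2}$, and invoking Lemma \ref{priest} ($|\Phi_\tau(\xi_i)|_p\le|\xi_i|_p$) with Jensen and Young's inequalities, one obtains for $y_i:=\mathbb E|\xi_i|_p^p$ the recursion $y_{i+1}\le(1+C\tau)y_i+C\tau$. Discrete Gronwall gives $\sup_i y_i\le C(p,T,\xi(0))$, and the case $1\le q\le p$ follows from $|\xi_i|_q\le|\xi_i|_p$ (finite measure) and Jensen.

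\emph{Step 2 (maximal bound \eqref{Esup}).} Here the naive routes fail: bounding $|\xi_{i+1}|_4\le|\xi_i|_4+|b_i|_4$ and summing loses the martingale cancellation (the sum of $n$ increments is of size $\tau^{-1/2}$), while inverting the scheme as $\xi_m=\xi_0+W^{curl}(t_m)-\tau\sum_i(K*\xi_i)\cdot\nabla\Phi_\tau(\xi_i)$ needs a uniform bound on $|\nabla\Phi_\tau(\xi_i)|_4$ unavailable without the Laplacian (cf. Remark \ref{regularity} and \eqref{gradientest}). Instead I would expand the fourth power: since $|\Phi_\tau(\xi_i)|_4\le|\xi_i|_4$,
\[
|\xi_{i+1}|_4^4-|\xi_i|_4^4\le\int_{\mathbb T^2}\!\big[4\Phi_\tau(\xi_i)^3 b_i+6\Phi_\tau(\xi_i)^2 b_i^2+4\Phi_\tau(\xi_i)b_i^3+b_i^4\big]\mathrm dx,
\]
so $\sup_{m\le n}|\xi_m|_4^4\le|\xi_0|_4^4+\sup_{m\le n}|N_m|+\sum_{i=0}^{n-1}R_i$, with $N_m:=\sum_{i<m}4\langle\Phi_\tau(\xi_i)^3,b_i\rangle$ a discrete martingale and $R_i$ the remaining drift terms. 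Raising to the power $p/4$ and taking expectations, the discrete Burkholder--Davis--Gundy inequality together with $|\langle\Phi_\tau(\xi_i)^3,b_i\rangle|\le|\xi_i|_4^3|b_i|_4$ gives
\[
\mathbb E\sup_{m\le n}|N_m|^{p/4}\le C\,\mathbb E\Big(\sum_i|\xi_i|_4^6|b_i|_4^2\Big)^{p/8}\le C\,\mathbb E\Big[\big(\sup_i|\xi_i|_4\big)^{3p/4}\big(\textstyle\sum_i|b_i|_4^2\big)^{p/8}\Big],
\]
and Hölder treats each term of $R_i$ analogously. Young's inequality then splits off a factor $\epsilon\,\mathbb E\sup_i|\xi_i|_4^p$ plus moments of $\sum_i|b_i|_4^r$ ($r=2,3,4$), which Rosenthal's inequality and the bounds $\mathbb E|b_i|_4^r\le C\tau^{r/2}$ control by $C(p,T)$. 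Absorbing the $\epsilon$-term (legitimate as $\mathbb E\sup_i|\xi_i|_4^p<\infty$ for each fixed $n$ by the crude estimate above) yields \eqref{Esup}.

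\emph{Step 3 (almost sure bound), and the main obstacle.} Writing $X_n:=\sup_{1\le i\le n}|\xi_i|_4$ for the mesh $\tau=T/n$, \eqref{Esup} gives $\mathbb E X_n^p\le C(p,T,\xi(0))$ for every large $p$, uniformly in $n$. For fixed $\varepsilon>0$ choose $p>1/\varepsilon$; Markov's inequality gives $\mathbb P(X_n>\tau^{-\varepsilon})=\mathbb P(X_n>(n/T)^\varepsilon)\le C(p,T)\,n^{-\varepsilon p}$, summable in $n$, so Borel--Cantelli shows $X_n\le\tau^{-\varepsilon}$ for all large $n$ almost surely; enlarging the constant over the finitely many remaining (a.s.\ finite) $X_n$ gives the claimed bound. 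The main obstacle is Step 2: because the equation lacks the regularising Laplacian, spatial derivatives of $\Phi_\tau(\xi_i)$ cannot be controlled uniformly, so the maximal estimate must be closed entirely through the $L^4$-energy expansion, the martingale structure of the increments, and a self-absorption argument, rather than through the variation-of-constants representation available for Navier--Stokes.
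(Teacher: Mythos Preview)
Your proposal is correct; Steps~1 and~3 match the paper's argument almost exactly (the paper compresses the binomial in Step~1 via a pointwise Young inequality rather than the full expansion, and in Step~3 writes $Z_n=X_n/n$ before invoking Chebyshev--Markov and Borel--Cantelli, but the content is the same).

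Step~2 is where your route genuinely differs. The paper telescopes $|\xi_{i+1}|_4^4-|\xi_i|_4^4$ just as you do, but instead of extracting $(\sup_i|\xi_i|_4)^{3p/4}$ from the martingale term and closing by self-absorption, it writes the sum $\sum_{l<m}\langle\Phi_\tau(\xi_l)^3,\Delta W^{curl}_{l+1}\rangle$ as a continuous-time stochastic integral, applies BDG in $\mathbb R$, bounds $\langle\Phi_\tau(\xi_l)^3,Q_1^{1/2}e_k\rangle^2\le|\Phi_\tau(\xi_l)|_6^6\,|Q_1^{1/2}e_k|_2^2$, and then uses Minkowski together with the bound $\sup_i\|\xi_i\|_{L^{3q}(\Omega,L^6)}\le C$ already supplied by \eqref{unib} (under the ``all large $p$'' hypothesis). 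The drift terms II, III are handled the same way, again feeding \eqref{unib} back in. So the paper never needs the a-priori finiteness check or the $\epsilon$-absorption: Step~1 does all the work. Your approach is more self-contained in that it does not call on $L^6$ moments from Step~1, at the price of the extra absorption step and the crude finiteness bound for fixed $n$; the paper's approach is shorter and exploits Step~1 more fully.
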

\begin{proof}  
We only need to prove \eqref{unib} for $q=p$ since the case $1\le q<p$ follows from the Hölder inequality. By binomial theorem, Young inequality, and \eqref{numericalsol}, we obtain
\begin{align*}
|\xi_{i+1}|^{p}&\leq |\Phi_{\tau}(\xi_{i})|^{p}+p\Phi_{\tau}(\xi_{i})^{p-1}\Delta W^{curl}_{i+1}+C_{p}[\Phi_{\tau}(\xi_{i})^{p-2}|\Delta W^{curl}_{i+1}|^{2}+|\Delta W^{curl}_{i+1}|^{p}].
\end{align*} 
The stochastic Fubini theorem gives
\begin{align*}
\mathbb{E}|\xi_{i+1}|_{p}^{p}&\leq \mathbb{E}|\Phi_{\tau}(\xi_{i})|_{p}^{p}+C_{p}\mathbb{E}\int_{\mathbb{T}^2}\Phi_{\tau}(\xi_{i})^{p-2}|\Delta W^{curl}_{i+1}|^{2}\ud x+C_{p}\mathbb{E}|\Delta W^{curl}_{i+1}|_{p}^{p}\\
&\leq \mathbb{E}|\xi_{i}|_{p}^{p}+C_{p}\left(1+\mathbb{E}|\xi_{i}|_{p}^{p}\right)\mathbb{E}|\Delta W^{curl}_{i+1}|_{p}^{2}+C_{p}\mathbb{E}|\Delta W^{curl}_{i+1}|_{p}^{p}.
\end{align*}
Recalling that $W^{E}=\sum_{k \in \mathbb{Z}_{0}^{2}} \tilde{\beta}_{k} e_{k}$ is an $\mathcal{E}$-cylindrical Wiener process, then
\begin{align*}
\Delta W^{curl}_{i+1}=\int_{t_{i}}^{t_{i+1}}\sum_{k \in \mathbb{Z}_{0}^{2}} 2\uppi |k|c_{-k} \ud\tilde{\beta}_{k}(t)  e_{k}=:\int_{t_{i}}^{t_{i+1}}\phi(t)\ud W^{E}(t),
\end{align*}
where $\phi(t)e_{k}=2\uppi |k|c_{-k}e_{k}$, $t\in[t_{i},t_{i+1}]$. It follows from \eqref{BDG} and \eqref{norm} that
\begin{align}
\mathbb{E}|\Delta W^{curl}_{i+1}|_{p}^{p}
\le C_{p}\left(\sum_{k \in \mathbb{Z}^{2}_{0}}|k|^{2}c_{-k}^{2}\left|e_{k}\right|_{p}^{2}\right)^{\frac{p}{2}}\tau^{\frac{p}{2}}\leq C_p\tau^{\frac{p}{2}}. \label{wcurlmoment}
\end{align}
Since $p\geq 2$, using H\"older inequality, we have
\begin{align*}
\mathbb{E}|\Delta W^{curl}_{i+1}|_{p}^{2}\leq\left(\mathbb{E}|\Delta W^{curl}_{i+1}|_{p}^{p}\right)^{\frac{2}{p}}
\leq C_p\tau.
\end{align*}
In conclusion, we have
\begin{align*}
\mathbb{E}|\xi_{i+1}|_{p}^{p}\leq (1+C_p\tau)\mathbb{E}|\xi_{i}|_{p}^{p}+C_p \tau,
\end{align*}
and \eqref{unib} follows from Gronwall lemma.

For \eqref{Esup}, it is sufficient to prove the case $p=4q$, for any $q \geq 1$. By Lemma \ref{priest}, we have
\begin{align*}
|\xi_{i+1}|_4^{4}
&\le |\xi_{i}|_4^{4}+4\langle\Phi_{\tau}(\xi_{i})^{3},\Delta W^{curl}_{i+1}\rangle+C\left[|\Phi_{\tau}(\xi_{i})|_4^{2}|\Delta W^{curl}_{i+1}|_4^{2}+|\Delta W^{curl}_{i+1}|_4^{4}\right]\\
&\le  |\xi_{0}|_4^{4}+4\sum_{l=0}^{i}\langle\Phi_{\tau}(\xi_{l})^{3},\Delta W^{curl}_{l+1}\rangle+C\sum_{l=0}^{i}\left[|\xi_{l}|_4^{2}|\Delta W^{curl}_{l+1}|_4^{2}+|\Delta W^{curl}_{l+1}|_4^{4}\right],
\end{align*} 
which implies
\begin{align*}
\mathbb{E}\sup_{1\le i\le n}|\xi_{i}|_4^{p}\le C_q\left(\mathbb{E} |\xi_{0}|_4^{p}+\RNum{1}+\RNum{2}+\RNum{3}\right)
\end{align*} 
with
\begin{align*}
\RNum{1}:=&\mathbb{E}\sup_{0\le t\le T}\left|\int_0^{t}\langle\Phi_{\tau}(\xi_{\kappa_n(s)})^{3},\ud W^{curl}(s)\rangle\right|^q,\\
\RNum{2}:=&\mathbb{E}\left(\sum_{l=0}^{n}|\xi_{l}|_4^{2}|\Delta W^{curl}_{l+1}|_4^{2}\right)^q,\\
\RNum{3}:=&\mathbb{E}\left(\sum_{l=0}^{n}|\Delta W^{curl}_{l+1}|_4^{4}\right)^q.
\end{align*} 
The Burkholder-Davis-Gundy inequality, \eqref{unib}, and the Minkowski inequality indicate
\begin{align*}
\RNum{1}\le&\mathbb{E}\left|\sum_{k\in\mathbb{Z}_0^2}\int_0^{T}\langle\Phi_{\tau}(\xi_{\kappa_n(s)})^{3},Q_1^{\frac{1}{2}}e_k\rangle^2\ud s\right|^\frac{q}{2}\\
\le&\left(\sum_{k\in\mathbb{Z}_0^2}\int_0^{T}\left\||\Phi_{\tau}(\xi_{\kappa_n(s)})|_6^6\right\|_{L^{\frac{q}{2}}(\Omega)}|Q_1^{\frac{1}{2}}e_k|_2^2\ud s\right)^\frac{q}{2}\\
\le&C\left(\sum_{k\in\mathbb{Z}_0^2}c_{-k}^2|k|^2\sup_{0\le i\le n}\left\|\xi_{i}\right\|_{L^{3q}(\Omega,L^6(\mathbb{T}^2))}^6\right)^\frac{q}{2}\\
\le&C(p,T,\xi(0)).
\end{align*}
By \eqref{unib} and \eqref{wcurlmoment}, we obtain
\begin{align*}
\RNum{2}\le\left(\sum_{l=0}^{n}\left\||\xi_{l}|_4^{2}|\Delta W^{curl}_{l+1}|_4^{2}\right\|_{L^q(\Omega)}\right)^q\le C_p\left(\sum_{l=0}^{n}(\mathbb{E}|\xi_{l}|_4^{2q})^{\frac{1}{q}}\tau\right)^q\le C(p,T,\xi(0)),
\end{align*}
and
\begin{align*}
\RNum{3}\le \left(\sum_{l=0}^{n}\left\||\Delta W^{curl}_{l+1}|_4^{4}\right\|_{L^p(\Omega)}\right)^q\le C(p,T).
\end{align*}
Hence, \eqref{Esup} follows from the above estimates. 

Let $X_n=\sup_{1\le i\le n}|\xi_{i}|_4$ and $Z_n=X_n/n$. Then
$$
\left(\mathbb{E}\left[\left|Z_{n}\right|^{p}\right]\right)^{1 / p} \leq C(p,T,\xi(0))  n^{-1}
$$
for all $p \in \mathbb{N}$ and all $n \in \mathbb{N}$. It follows from Chebyshev--Markov inequality and Borel--Cantelli lemma that for all $\varepsilon>0$ and  all $n \in \mathbb{N}$
$$
\left|Z_{n}(\omega)\right| \leq C(T,\xi(0),\varepsilon) n^{-1+\varepsilon} \quad \text { a.s.,}
$$
which implies
$$
X_{n}(\omega)\leq C(T,\xi(0),\varepsilon)  \tau^{-\varepsilon} \quad \text { a.s. }
$$
The proof is completed.
\end{proof} 

\section{Convergence order of numerical method} \label{sec4}
In this section, we are devoted to the convergence analysis of the splitting SIE method, including the pathwise convergence rate and the rate of convergence in probability. Unless necessary, we omit the notation ‘$\mathbb{P}$-a.s.’ or ‘a.s.’
\subsection{Pathwise convergence analysis}
If the initial datum $\xi(0)\in W^{1,4}(\mathbb{T}^2)$ and \eqref{condition} holds for $h>4$, \cite{MR4107937} proves that the exact solution $\xi\in L^{\infty}\left(0, T; W^{1,4}(\mathbb{T}^2)\right) \cap C_{w}\left(\left[0, T\right] ; W^{1,4}(\mathbb{T}^2)\right)$ a.s. For the purpose of obtaining the convergence rate, we begin with deriving the $W^{2,4}(\mathbb{T}^2)$ regularity of the exact solution. Throughout this part, we mainly use the pathwise argument.
\begin{pro}\label{W24}
Let Assumption \ref{assum} hold, and  
$\xi(0)\in H^{m}(\mathbb{T}^2) \cap\mathcal E$ almost surely for some $m\ge\frac{5}{2}$. Then $\xi \in L^{\infty}(0, T; W^{2,4}(\mathbb{T}^2))$, 
$\mathbb{P}$-a.s. Moreover, 
\begin{align*}
\sup _{0\leq t \leq T}|| \xi(t)||_{2,4} \leq C\left(T,\xi(0),\omega \right), \  a.s.
\end{align*} 
\end{pro}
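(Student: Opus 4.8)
I would argue pathwise, deriving an energy inequality for the $W^{2,4}(\mathbb{T}^2)$-norm and closing it with a logarithmic Gronwall argument. Fix $\omega$ outside a null set. First I remove the noise: set $v:=\xi-W^{curl}$ and $u:=K*\xi$. Since $\nabla\cdot u=0$ and the increment of $W^{curl}$ cancels the stochastic forcing in \eqref{vorticity}, the field $v$ solves the random transport equation
\begin{equation*}
\partial_t v+u\cdot\nabla v=-\,u\cdot\nabla W^{curl},\qquad u=K*v+K*W^{curl}.
\end{equation*}
Under Assumption \ref{assum} the threshold $h\ge\frac92$ gives $W^{curl}\in H^{h-1}\hookrightarrow W^{3,4}(\mathbb{T}^2)$, so $W^{curl}\in C([0,T];W^{3,4})$ almost surely. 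Since $\xi(0)\in H^m\hookrightarrow W^{1,4}$ for $m\ge\frac52$ and \eqref{condition} holds for $h>4$, the result of \cite{MR4107937} quoted above furnishes the baseline $\xi\in L^\infty(0,T;W^{1,4})$ a.s. I would record the two consequences $|\xi(t)|_\infty\le C\|\xi(t)\|_{1,4}$ (from $W^{1,4}\hookrightarrow L^\infty$) and, applying Lemma \ref{Kbound} to $\partial_j\xi$, $\|u(t)\|_{2,4}\le C\|\xi(t)\|_{1,4}$; both are $\le C(T,\omega)$ uniformly on $[0,T]$.

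Next comes the $W^{2,4}$ estimate. For each multi-index $\alpha$ with $|\alpha|\le2$ I apply $\partial^\alpha$ to the equation, test against $|\partial^\alpha v|^2\partial^\alpha v$, and integrate over $\mathbb{T}^2$. The top-order transport term cancels by the divergence-free property, $\int_{\mathbb{T}^2}(u\cdot\nabla\partial^\alpha v)\,|\partial^\alpha v|^2\partial^\alpha v\,\ud x=\tfrac14\int_{\mathbb{T}^2}u\cdot\nabla|\partial^\alpha v|^4\,\ud x=0$, leaving only the commutator $[\partial^\alpha,u\cdot\nabla]v$ and the forcing. Summing over $|\alpha|\le2$ and using Hölder, the commutator estimate \eqref{commutator} controls $\sum_{|\alpha|\le2}\big|[\partial^\alpha,u\cdot\nabla]v\big|_4\le C\big(\|u\|_{2,4}|\nabla v|_\infty+|\nabla u|_\infty\|v\|_{2,4}\big)$, while the Moser estimate \eqref{Moser} controls the forcing by $\|u\cdot\nabla W^{curl}\|_{2,4}\le C(|u|_\infty\|W^{curl}\|_{3,4}+|\nabla W^{curl}|_\infty\|u\|_{2,4})\le C(T,\omega)$; this is exactly where $W^{curl}\in W^{3,4}$, i.e. $h\ge\frac92$, is needed. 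Since \eqref{commutator} as stated wants $g\in W^{m+1,p}$ whereas $v$ is only being shown to lie in $W^{2,4}$, I would first carry out this computation on a Galerkin (or mollified) approximation and pass to the limit. Using $\|u\|_{2,4}\le C(T,\omega)$ and $|\nabla v|_\infty\le C\|v\|_{2,4}$ (from $W^{2,4}\hookrightarrow W^{1,\infty}$ on $\mathbb{T}^2$), the estimate collapses to
\begin{equation*}
\frac{\ud}{\ud t}\|v\|_{2,4}^4\le C(T,\omega)\big(1+|\nabla u|_\infty\big)\big(1+\|v\|_{2,4}^4\big).
\end{equation*}

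The sole borderline term is $|\nabla u|_\infty=|\nabla(K*\xi)|_\infty$, the $L^\infty$ endpoint of Lemma \ref{Kbound}, which is not dominated by $|\xi|_\infty$; controlling it is the heart of the argument and is precisely where a Beale--Kato--Majda type inequality enters, in the form $|\nabla u|_\infty\le C\big(1+|\xi|_\infty\log(e+\|\xi\|_{2,4})+|\xi|_2\big)$. Because $|\xi|_\infty$ and $|\xi|_2$ are already bounded by $C(T,\omega)$ and $\|\xi\|_{2,4}\le\|v\|_{2,4}+\|W^{curl}\|_{2,4}$, substitution turns the differential inequality into the Osgood-type form $\tfrac{\ud}{\ud t}y\le C(T,\omega)\big(1+\log(e+y)\big)(1+y)$ with $y=\|v\|_{2,4}^4$; since $\int^\infty\frac{\ud r}{(1+r)\log(e+r)}=\infty$ there is no finite-time blow-up, so $y$ stays finite on $[0,T]$. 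Undoing $v=\xi-W^{curl}$ and using $\sup_{[0,T]}\|W^{curl}\|_{2,4}\le C(\omega)$ then yields $\sup_{0\le t\le T}\|\xi(t)\|_{2,4}\le C(T,\xi(0),\omega)$, proving Proposition \ref{W24}. The main obstacle, as indicated, is the endpoint control of $|\nabla u|_\infty$ together with the Osgood closure; I note that because $\|u\|_{2,4}\le C\|\xi\|_{1,4}$ already bounds $|\nabla u|_\infty\le C\|u\|_{2,4}$, one could alternatively close with an ordinary Gronwall inequality, but the logarithmic route reflects the tools highlighted in the paper and is more robust.
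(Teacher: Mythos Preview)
Your argument is correct and, once you drop the Osgood detour (as you yourself observe at the end), actually more direct than the paper's. Both proofs subtract the noise at the vorticity level (your $v=\xi-W^{curl}$ is the paper's $\eta$; beware that the paper reserves $v$ for $u-W$) and run an $L^4$ energy estimate on second derivatives. The paper, however, expands $\partial_{ij}(u\cdot\nabla\eta)$ by hand and ends up with a differential inequality that requires not only $|\nabla u|_\infty$ but also $|\nabla^2 u|_\infty$; it then spends most of the proof on a separate $H^s$ energy estimate for $u-W$ with $s=(h-1)\wedge(m+1)>3$, using the commutator and Moser estimates at the velocity level and the Beale--Kato--Majda inequality $|\nabla u|_\infty\le C|\xi|_\infty[1+\log(1+|\nabla\xi|_4/|\xi|_\infty)]$ (with the already-known $W^{1,4}$ bound on $\xi$) to close by ordinary Gronwall. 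Your use of the commutator estimate \eqref{commutator} directly at the vorticity level trades $|\nabla^2 u|_\infty$ for $\|u\|_{2,4}|\nabla v|_\infty$, and since $\|u\|_{2,4}\le C\|\xi\|_{1,4}$ and $|\nabla u|_\infty\le C\|u\|_{2,4}$ are both already bounded, ordinary Gronwall closes immediately---the BKM/Osgood machinery is not needed, and your presentation would be cleaner if you led with that. What the paper's longer route buys is the extra information $u\in L^\infty(0,T;H^s)$ for some $s>3$, which is not used downstream but explains why the thresholds $m\ge\tfrac52$ and $h\ge\tfrac92$ appear in that form.
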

\begin{proof}
By the regularity of $\xi(0)$, it holds that $\xi(0) \in W^{2,4}(\mathbb{T}^2)\cap \mathcal{E}$ since $m\ge \frac{5}{2}$. From \cite[Theorem 5]{MR4107937}, we know that $\xi \in L^{\infty}(0, T ; W^{1,4}(\mathbb{T}^2))$, $\mathbb{P}$-a.s. Thus, it remains to  estimate $\nabla^{2} \xi$ and look for $|\nabla^{2} \xi| \in L^{\infty}\left(0, T ; L^{4}(\mathbb{T}^2)\right)$, a.s.  

Let us take the Hessian for Eq. (\ref{vorticity}):
$$
\ud \nabla^{2} \xi+\nabla^{2}(u \cdot \nabla \xi) \ud t=\ud \nabla^{2} W^{c u r l},
$$
that can be rewritten for each component of the Hessian as
$$
\ud \partial_{ij} \xi+\partial_{ij}(u \cdot \nabla \xi) \ud t=\ud \partial_{ij} W^{c u r l}, \quad i,j=1,2.
$$
By defining $\eta=\xi-W^{c u r l}$, we have
$$
\frac{\partial}{\partial t} \partial_{ij} \eta+\partial_{ij}[u \cdot \nabla \eta]=-\partial_{ij}[u \cdot \nabla W^{c u r l}], \quad i,j=1,2.
$$
Let us multiply the above equation by $\partial_{ij} \eta|\nabla^{2} \eta|^{2}$, sum over $i,j$, and then integrate over $\mathbb{T}^2$. Then it holds that
\begin{align}\label{eta2}
\frac{1}{4} \frac{\ud}{\ud t}|\nabla^{2} \eta(t)|_{4}^{4}=&-\sum_{i,j=1}^{2}\left\langle\partial_{ij}[u \cdot \nabla \eta], \partial_{ij} \eta|\nabla^{2} \eta|^{2}\right\rangle\notag\\
&-\sum_{i,j=1}^{2}\left\langle\partial_{ij}[u \cdot \nabla W^{c u r l}], \partial_{ij} \eta|\nabla^{2} \eta|^{2}\right\rangle,
\end{align}
where the second sum is bounded as
\begin{align*}
\Big| \sum_{i,j}\langle\partial_{ij}[u \cdot \nabla W^{c u r l}], \partial_{ij} \eta|\nabla^{2} \eta|^{2}\rangle\Big|  & \leq C|| \nabla W^{curl}||_{2,4}|\nabla^{2}u|_{\infty}|\nabla^{2}\eta|_{4}^{3}\\
&\leq |\nabla^{2}\eta|_{4}^{4} +C|| \nabla W^{curl}||_{2,4}^{4}|\nabla^{2}u|_{\infty}^{4}.
\end{align*}
For the first sum of the right hand of \eqref{eta2}, a direct application of  chain rule  gives
\begin{align*}
&\sum_{i,j}\left\langle\partial_{ij}[u \cdot \nabla \eta], \partial_{ij} \eta|\nabla^{2} \eta|^{2}\right\rangle\\
=&\sum_{i,j,k}\left\langle\partial_{ij}u_{k} \partial_{k} \eta, \partial_{ij} \eta|\nabla^{2} \eta|^{2}\right\rangle+2\sum_{i,j,k}\left\langle\partial_{j}u_{k} \partial_{ik} \eta, \partial_{ij} \eta|\nabla^{2} \eta|^{2}\right\rangle\\
&+\sum_{i,j,k}\left\langle u_{k} \partial_{ikj} \eta, \partial_{ij} \eta|\nabla^{2} \eta|^{2}\right\rangle\\
=&:\RNum{1}+2\RNum{2}+\RNum{3}.
\end{align*}
It follows from Young and H\"older's inequalities that
\begin{align*}
|\RNum{1}|
\leq C\int_{\mathbb{T}^2}|\nabla^{2}u||\nabla\eta||\nabla^{2}\eta|^{3}\ud x\leq C|\nabla^{2} u|_{\infty}|\nabla\eta|_{4}
|\nabla^{2}\eta|_{4}^{3}\le |\nabla^{2}\eta|_{4}^{4}+C|\nabla^{2} u|^4_{\infty}|\nabla\eta|^4_{4}
\end{align*}
and
\begin{align*}
|\RNum{2}|
\leq C\int_{\mathbb{T}^2}|\nabla u| |\nabla^{2}\eta|^{4}\ud x\leq C|\nabla u|_{\infty} |\nabla^{2}\eta|_{4}^{4}.
\end{align*}
Besides, from the integration by parts and the fact that $u=K*\xi$ is divergence free, we get
\begin{align*}
\RNum{3}
=&-\sum_{i,j,k}\int_{\mathbb{T}^2}  \partial_{k}[u_{k} \partial_{ij} \eta|\nabla^{2} \eta|^{2}]\partial_{ij} \eta\ud x\\
=&-\sum_{i,j,k}\int_{\mathbb{T}^2}  \partial_{k}u_{k} |\partial_{ij} \eta|^{2}|\nabla^{2} \eta|^{2}+u_{k} \partial_{ijk} \eta|\nabla^{2} \eta|^{2}\partial_{ij}\eta +u_{k} |\partial_{ij} \eta|^{2} \partial_{k}|\nabla^{2} \eta|^{2}\ud x\\
=&-\sum_{i,j,k}\int_{\mathbb{T}^2}  u_{k} \partial_{ijk} \eta|\nabla^{2} \eta|^{2}\partial_{ij}\eta\ud x -\frac{1}{2}\sum_{k}\int_{\mathbb{T}^2} u_{k}  \partial_{k}|\nabla^{2} \eta|^{4}\ud x\\
=&-\sum_{i,j,k}\int_{\mathbb{T}^2}  u_{k} \partial_{ijk} \eta|\nabla^{2} \eta|^{2}\partial_{ij}\eta\ud x \\
=&-\RNum{3},
\end{align*}
and hence $\RNum{3}=0$. 
Collecting the above estimations  together, we have
\begin{align}\label{eta21}\notag
\frac{\ud}{\ud t}|\nabla^{2} \eta(t)|_{4}^{4}\le &
(8+C|\nabla u|_{\infty})|\nabla^{2}\eta|_{4}^{4} +C||\nabla W^{curl}||_{2,4}^{4}|\nabla^{2}u|_{\infty}^{4}+C
|\nabla^{2} u|^4_{\infty}|\nabla\eta|^4_{4}\\
\le &C(1+|\nabla u|_{\infty})|\nabla^{2}\eta|_{4}^{4} +C|\nabla^{2}u|_{\infty}^{4}
\end{align}
holds for some $C=C(\omega,\xi(0),T)$, where Assumption \ref{assum} was used in the second inequality.

Now we proceed to estimate $|\nabla^{2} u|_{\infty}$.
According to the embedding $H^{s}(\mathbb{T}^2)\hookrightarrow W^{2,\infty}(\mathbb{T}^2)$ for any $s>3$, it is sufficient to estimate $\|u\|_{s}$. Define $v=u-W$, and let $\alpha=(\alpha_1,\alpha_2)$ with $|\alpha|\le s$ being arbitrarily fixed. Then applying $\partial^{\alpha}$ to \eqref{Euler} gives
\begin{align}
\partial_{t} \partial^{\alpha}v+\partial^{\alpha}[(v \cdot \nabla) v]+\partial^{\alpha}[(W \cdot \nabla) v+(v \cdot \nabla) W]=-\partial^{\alpha}[\nabla \bm{\pi}+(W \cdot \nabla) W].  \label{dpde}
\end{align}
By taking the scalar product of (\ref{dpde}) against $\partial^{\alpha}v$ and $\nabla\cdot v=0$, we obtain
\begin{align*}
\frac{1}{2}\partial_{t} \left|\partial^{\alpha}v\right|_{2}^{2}=&- \left\langle \partial^{\alpha}[(W \cdot \nabla) W],\partial^{\alpha}v\right\rangle-\left\langle \partial^{\alpha}[(W \cdot \nabla) v],\partial^{\alpha}v\right\rangle\\
&-\left\langle \partial^{\alpha}[(v \cdot \nabla) W],\partial^{\alpha}v\right\rangle+\left\langle(v \cdot \nabla) \partial^{\alpha}v-\partial^{\alpha}[(v \cdot \nabla) v],\partial^{\alpha}v\right\rangle,
\end{align*}
The commutator estimate \eqref{commutator} implies that 
\begin{align}
\left|\partial^{\alpha}(v \cdot \nabla v)-v \cdot \nabla \partial^{\alpha} v\right|_{2} \leq C\|v\|_{s}|\nabla v|_{\infty} \label{keyest}
\end{align}
and
\begin{align*}
\left|\left\langle \partial^{\alpha}[(W \cdot \nabla) v],\partial^{\alpha}v\right\rangle\right|=&\left|\left\langle \partial^{\alpha}[(W \cdot \nabla) v]-(W \cdot \nabla) \partial^{\alpha}v,\partial^{\alpha}v\right\rangle\right| \notag\\
\leq& C\left( \|W\|_{s}|\nabla v|_{\infty}+\|W\|_{1,\infty}\| v\|_{s}\right)\left|\partial^{\alpha}v\right|_{2},
\end{align*}
because the divergence of $W$ vanishes.
In addition, the Young inequality yields
\begin{align*}
|\left\langle \partial^{\alpha}[(W \cdot \nabla) W],\partial^{\alpha}v\right\rangle|\leq \frac{1}{2} \left| \partial^{\alpha}v\right|_{2}^{2}+\frac{1}{2} \left| \partial^{\alpha}[(W \cdot \nabla) W]\right|_{2}^{2}, \end{align*}
and the Moser estimate  \eqref{Moser} gives
\begin{align}
|\left\langle \partial^{\alpha}[(v \cdot \nabla) W],\partial^{\alpha}v\right\rangle|\leq C\left(\|v\|_{s}^{2}|\nabla W|_{\infty}+|v|_{\infty}\|\nabla W\|_{s}\left| \partial^{\alpha}v\right|_{2}\right). \label{thirdterm}
\end{align}
Combining \eqref{keyest}-\eqref{thirdterm} and summing over all $|\alpha|\leq s$, it follows that
\begin{align*}
\frac{1}{2}\partial_{t} \left\|v\right\|_{s}^{2} \leq&\frac{1}{2} \left\| v\right\|_{s}^{2}+\frac{1}{2} \left\| (W \cdot \nabla) W\right\|_{s}^{2}+C\|v\|_{s}^{2}\left(\|W\|_{1,\infty}+|\nabla W|_{\infty}+|\nabla v|_{\infty}\right)\\
&+C\left( \|W\|_{s}|\nabla v|_{\infty}+|v|_{\infty}\|\nabla W\|_{s}\right)\|v\|_{s}\\
\leq&C\left[\left\| W\right\|_{s}^{2} (\left| \nabla W\right|_{\infty}^{2}+|\nabla v|_{\infty}^{2})+\left\|  W\right\|_{s+1}^{2}(\left| W\right|_{\infty}^{2} +|v|_{\infty}^{2})\right]\\
&+C^{\prime}\left(1+\|W\|_{1,\infty}+|\nabla W|_{\infty}+|\nabla v|_{\infty}\right)\|v\|_{s}^{2}\\
=:&F(t)+A(t)\|v\|_{s}^{2}.
\end{align*}
We make use of the Beale–Kato–Majda type inequality (see \cite[Formula (20)]{MR4107937})
$$
|\nabla u|_{\infty}\leq C|\xi|_{\infty}\left[1+\log \left(1+\frac{|\nabla \xi|_{4}}{|\xi|_{\infty}}\right)\right]
$$
to deduce that $|\nabla v|_{\infty}\leq |\nabla u|_{\infty}+|\nabla W|_{\infty}\le C, \ a.s.$
Taking $s=(h-1)\land (m+1)$, then Assumption \ref{assum} and Sobolev embedding theorem ensure that $A$ and $F$ belong to $ L^\infty([0,T]),\ a.s.$ The regularity of $\xi(0)$ indicates that $u(0) \in [H^{m+1}(\mathbb{T}^2)]^2\cap \mathcal{H}$. Hence,  the differential form of Gronwall lemma produces that for any  $t\in[0,T]$,
\begin{align*}
\|v(t)\|_{s}^{2}\leq \exp \left(\int_{0}^{t} A\left(s\right) \ud s\right) \|u(0)\|_{s}^{2}+\int_{0}^{t} \exp \left(\int_{s}^{t} A\left(u\right)\ud u\right) F(s) \ud s\le C,\ a.s.
\end{align*}
 Recalling $u=v+W$, thus we have shown that 
$$\sup_{t\in[0,T]}|\nabla^{2} u(t)|_{\infty}\le C\sup_{t\in[0,T]}\|u(t)\|_{s}\le C,\ a.s.$$ 
Substituting the above estimations into  \eqref{eta21}, we get by using Gronwall lemma 
$$
\sup _{0\leq t \leq T}|\nabla^{2} \eta(t)|_{4} \leq C\left(T,\xi(0),\omega \right).
$$
Finally, the regularity of the $W$ and the fact $\xi=\eta+W^{c u r l}$ conclude the proof.
\end{proof}

With Proposition \ref{W24} in hand, it is easy to derive the time Hölder continuity of the exact solution in $H^1(\mathbb{T}^2)$.
\begin{cor}\label{Hold}
Under the same conditions of Proposition \ref{W24}, the exact solution $\xi(t)$ of \eqref{vorticity} in $H^1(\mathbb{T}^2)$ is $\theta$-H\"older continuous in time with $\theta\in(0,\frac{1}{2})$, i.e., for any $0\le s<t\le T$,
$$\|\xi(t)-\xi(s)\|_1\le C(\xi(0),T,\theta,\omega)|t-s|^{\theta}.$$

\end{cor}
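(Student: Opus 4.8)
The plan is to use the integral form of the vorticity equation \eqref{vorticity}. For $0\le s<t\le T$,
\begin{align*}
\xi(t)-\xi(s)=-\int_s^t(K*\xi(r))\cdot\nabla\xi(r)\,\ud r+\big(W^{curl}(t)-W^{curl}(s)\big),
\end{align*}
so it suffices to bound the drift integral and the noise increment separately in $H^1(\mathbb{T}^2)$; I expect the former to be Lipschitz in time, while the latter carries the entire half-order loss.

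First I would control the drift. Proposition \ref{W24} gives $\sup_{0\le r\le T}\|\xi(r)\|_{2,4}\le C(T,\xi(0),\omega)$ a.s., and the two-dimensional embedding $W^{2,4}(\mathbb{T}^2)\hookrightarrow W^{1,\infty}(\mathbb{T}^2)$ then yields $\sup_r(|\xi(r)|_\infty+|\nabla\xi(r)|_\infty)\le C$ a.s. Writing $u=K*\xi$ and using that convolution commutes with differentiation, $\nabla^2u=\nabla(K*\nabla\xi)$, so Lemma \ref{Kbound} gives $|\nabla^2u|_4\le C|\nabla\xi|_4$ and hence, via $W^{1,4}\hookrightarrow L^\infty$, $|u|_\infty+|\nabla u|_\infty\le C\|\xi\|_{2,4}\le C$ a.s. Expanding $\partial_i[(u\cdot\nabla)\xi]=(\partial_iu)\cdot\nabla\xi+u\cdot\nabla\partial_i\xi$ and using the finiteness of $|\mathbb{T}^2|$ to pass from the $L^4$- to the $L^2$-norm of second derivatives, these bounds give
\begin{align*}
\sup_{0\le r\le T}\|(K*\xi(r))\cdot\nabla\xi(r)\|_1\le C(T,\xi(0),\omega),\quad\text{a.s.}
\end{align*}
Therefore $\big\|\int_s^t(K*\xi)\cdot\nabla\xi\,\ud r\big\|_1\le C|t-s|\le C\,T^{1-\theta}|t-s|^\theta$ for every $\theta\in(0,1)$.

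For the noise term I would invoke the Hölder regularity of the paths of the $Q_1$-Wiener process $W^{curl}$. Since $W^{curl}(t)-W^{curl}(s)=2\uppi\sum_{k\in\mathbb{Z}_0^2}|k|c_{-k}(\tilde\beta_k(t)-\tilde\beta_k(s))e_k$ is a centred Gaussian element of $H^{h-1}(\mathbb{T}^2)$ with
\begin{align*}
\mathbb{E}\|W^{curl}(t)-W^{curl}(s)\|_{h-1}^2=(2\uppi)^2(t-s)\sum_{k\in\mathbb{Z}_0^2}|k|^2c_{-k}^2\|e_k\|_{h-1}^2\le C(t-s),
\end{align*}
the series being finite because $\|e_k\|_{h-1}^2\le C|k|^{2(h-1)}$ and Assumption \ref{assum} supplies $\sum_kc_{-k}^2|k|^{2h}<\infty$. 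By Gaussianity $\mathbb{E}\|W^{curl}(t)-W^{curl}(s)\|_{h-1}^{2m}\le C_m(t-s)^m$ for every $m\ge1$, so the Kolmogorov continuity theorem produces an a.s. $\theta$-Hölder modification in $H^{h-1}$ for each $\theta<\frac12-\frac1{2m}$; since $h-1\ge\frac72>1$, the embedding $H^{h-1}(\mathbb{T}^2)\hookrightarrow H^1(\mathbb{T}^2)$ transfers this to $H^1$, and letting $m\to\infty$ covers all $\theta\in(0,\frac12)$. Combining the two estimates gives $\|\xi(t)-\xi(s)\|_1\le C(\xi(0),T,\theta,\omega)|t-s|^\theta$, as claimed.

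The main obstacle is the uniform-in-time $H^1$-bound on the quadratic drift: controlling $|\nabla[(u\cdot\nabla)\xi]|_2$ forces $\nabla u\in L^\infty$ and $\nabla^2\xi\in L^2$, both of which are borderline in two dimensions and rely on the full $W^{2,4}$-regularity of Proposition \ref{W24} together with the Biot–Savart bounds of Lemma \ref{Kbound}. Once the drift is shown to be Lipschitz, the remaining half-order loss is entirely attributable to the Gaussian noise, for which the restriction $\theta<\frac12$ is sharp.
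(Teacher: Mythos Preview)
Your proof is correct and follows essentially the same strategy as the paper: split $\xi(t)-\xi(s)$ into the drift integral and the noise increment, show the former is Lipschitz in time via the $W^{2,4}$-regularity from Proposition \ref{W24}, and use the $\theta$-H\"older continuity of the $Q_1$-Wiener paths for the latter. The only cosmetic difference is that the paper bounds $\|(K*\xi)\cdot\nabla\xi\|_1$ directly through the $L^4\times L^4\hookrightarrow L^2$ H\"older inequality together with Lemma \ref{Kbound} and \eqref{K4}, whereas you first upgrade to $L^\infty$-bounds on $u,\nabla u,\nabla\xi$ via Sobolev embeddings; both routes give the same Lipschitz estimate.
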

\begin{proof}
The exact solution of \eqref{vorticity} satisfies
\begin{align}
\xi(t)-\xi(s)=-\int_{s}^{t}K*\xi(r)\cdot \nabla \xi(r)\mathrm{d}r+\int_{s}^{t}\mathrm{d}W^{curl}(r), 
\label{inteq}
\end{align}
for any $t>s\ge0$.
It follows from Assumption \ref{assum} that almost surely sample paths of $W^{curl}$ in $H^1(\mathbb{T}^2)$ are H\"older continuous with exponent $\theta\in (0,\frac{1}{2})$ on $[0, T]$, i.e.,
\begin{align}\label{e2}
\left\|\int_{s}^{t}\ud W^{curl}(r)\right\|_{1}=\left\|W^{curl}(t)-W^{curl}(s)\right\|_{1}\leq C(\omega,\theta)|t-s|^{\theta}. 
\end{align}
As a result of H\"older's inequality, \eqref{K4}, and Proposition \ref{W24}, it holds that
\begin{align*}
\left|\int_{s}^{t}K*\xi(r)\cdot \nabla\xi(r)\ud r\right|_{2}
\leq&\sup_{r\in[0,T]}\left|K*\xi(r)\right|_{4}\sup_{r\in[0,T]}\left\| \xi(r)\right\|_{1,4}(t-s)\notag\\
\leq&C\sup_{r\in[0,T]}\left|\xi(r)\right|_{2}\sup_{r\in[0,T]}\left\| \xi(r)\right\|_{1,4} (t-s)\notag\\
\leq&C(\xi(0),T,\omega)(t-s). 
\end{align*}
Similarly, by chain rule, Lemma \ref{Kbound}, \eqref{K4}, and Proposition \ref{W24},
\begin{align}\label{H1-hold}\notag
\left|\nabla\int_{s}^{t}K*\xi(r)\cdot \nabla\xi(r)\ud r\right|_{2}
\leq&\int_{s}^{t}(\left|\nabla [K*\xi(r)]\right|_{4}\left| \nabla\xi(r)\right|_{4}+\left|K*\xi(r)\right|_{4}\left|\nabla^{2}\xi(r)\right|_{4})\ud r\notag\\\notag
\leq&\int_{s}^{t}(\left|\xi(r)\right|_{4}\left| \nabla\xi(r)\right|_{4}+\left|\xi(r)\right|_{2}\left|\nabla^{2}\xi(r)\right|_{4})\ud r\notag\\
\leq&C(T,\xi(0),\omega)(t-s). 
\end{align}
Combining the above estimates and \eqref{inteq}, the desired result follows.
\end{proof}

For $i=0,\ldots,n$, let $e(t_{i}):=\xi_{i}-\xi(t_{i}).$ Theorem \ref{convergenceorder} states that the pathwise convergence order of $|e(t_i)|_2$ is almost $\frac{1}{2}$, whose proof relies on Proposition \ref{W24}, Corollary \ref{Hold}, and the orthogonality property $\langle K*\xi\cdot\nabla e(t_{i}),e(t_{i})\rangle=0$ of the nonlinear term.

\begin{tho} \label{convergenceorder}
Let Assumption \ref{assum} hold. If 
$\xi(0)\in H^{m}(\mathbb{T}^2)\cap\mathcal E$ almost surely  for some $m\ge\frac{5}{2}$ and $\xi(0)\in L^{p}(\Omega,L^{p}(\mathbb{T}^2))$ for all large enough $p\in\mathbb{N}$, then for sufficiently small $\tau>0$, 
$$
\sup_{1\leq i\leq n}\left|\xi_{i}-\xi(t_{i})\right|_{2} \leq C(\xi(0),T,\omega,\theta)\tau^{\theta},\  a.s.,
$$
where $\theta \in (0,\frac{1}{2})$.
\end{tho}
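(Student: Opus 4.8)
The plan is to derive a one-step error recursion and close it by a discrete Gronwall argument, the decisive point being that the splitting makes the stochastic increment cancel and that the quadratic nonlinearity is skew-symmetric when tested against the error. Writing $e_i:=\xi_i-\xi(t_i)$, $u_i:=K*\xi_i$, $u(r):=K*\xi(r)$, and recalling $\bar\xi_{i+1}=\Phi_\tau(\xi_i)=\xi_{i+1}-\Delta W^{curl}_{i+1}$, I would subtract the integrated form \eqref{inteq} of the exact equation on $[t_i,t_{i+1}]$ from the scheme \eqref{numericalsol}; the common increment $\Delta W^{curl}_{i+1}$ drops out, leaving
\begin{align*}
e_{i+1}-e_i=-\tau\, u_i\cdot\nabla\bar\xi_{i+1}+\int_{t_i}^{t_{i+1}}u(r)\cdot\nabla\xi(r)\,\ud r,\qquad e_0=0.
\end{align*}
First I would test this identity against $e_{i+1}$ in $L^2(\mathbb T^2)$ and use the polarization identity $\langle a-b,a\rangle=\tfrac12(|a|_2^2-|b|_2^2+|a-b|_2^2)$, so that it suffices to bound the right-hand side from above, the nonnegative term $\tfrac12|e_{i+1}-e_i|_2^2$ being simply discarded.

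The heart of the argument is the treatment of the nonlinear contributions. Substituting $\nabla\bar\xi_{i+1}=\nabla\xi(t_{i+1})+\nabla e_{i+1}-\nabla\Delta W^{curl}_{i+1}$ and freezing the exact integrand at $t_{i+1}$, I would reorganize the tested right-hand side into four pieces: (i) the crucial term $-\tau\langle u_i\cdot\nabla e_{i+1},e_{i+1}\rangle$, which vanishes identically by the orthogonality $\langle K*\xi\cdot\nabla\zeta,\zeta\rangle=0$ (since $u_i$ is divergence free); (ii) the velocity-difference term $\tau\langle(u(t_{i+1})-u_i)\cdot\nabla\xi(t_{i+1}),e_{i+1}\rangle$; (iii) the increment term $\tau\langle u_i\cdot\nabla\Delta W^{curl}_{i+1},e_{i+1}\rangle$; and (iv) the consistency remainder $\langle R_i,e_{i+1}\rangle$ with $R_i=\int_{t_i}^{t_{i+1}}\bigl(u(r)\cdot\nabla\xi(r)-u(t_{i+1})\cdot\nabla\xi(t_{i+1})\bigr)\,\ud r$. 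That (i) is precisely the piece carrying no small prefactor, and that it disappears, is the whole reason for testing against $e_{i+1}$.

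The remaining three pieces each acquire a favorable power of $\tau$. For (ii) I would write $u(t_{i+1})-u_i=K*(\xi(t_{i+1})-\xi(t_i))-K*e_i$; by \eqref{K4}, Corollary \ref{Hold}, and the uniform bound $\sup_t\|\xi(t)\|_{1,4}\le C$ from Proposition \ref{W24}, the first part is $\le C\tau^{1+\theta}|e_{i+1}|_2$ while the second is the Gronwall term $\le C\tau|e_i|_2|e_{i+1}|_2$. For (iv), expanding the integrand difference as $(u(r)-u(t_{i+1}))\cdot\nabla\xi(r)+u(t_{i+1})\cdot\nabla(\xi(r)-\xi(t_{i+1}))$ and using again \eqref{K4}, the $H^1$-temporal Hölder continuity of $\xi$ (Corollary \ref{Hold}), and the $W^{2,4}$ regularity (Proposition \ref{W24}) gives $|R_i|_2\le C\tau^{1+\theta}$, hence $\langle R_i,e_{i+1}\rangle\le C\tau^{1+\theta}|e_{i+1}|_2$. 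The increment term (iii) is where the pathwise $L^4$ bound $\sup_i|\xi_i|_4\le C\tau^{-\varepsilon}$ of Proposition \ref{lm3.2} enters: bounding $|u_i|_4\le C|\xi_i|_4\le C\tau^{-\varepsilon}$ and $|\nabla\Delta W^{curl}_{i+1}|_4\le C\tau^{\theta}$ (Hölder continuity of the paths of $W^{curl}$ under Assumption \ref{assum}), one obtains $\le C\tau^{1+\theta-\varepsilon}|e_{i+1}|_2$.

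Collecting the four estimates and applying Young's inequality to split off $\tfrac12\tau|e_{i+1}|_2^2$ from every $O(\tau^{1+\theta-\varepsilon})|e_{i+1}|_2$ factor, I arrive at
\begin{align*}
|e_{i+1}|_2^2\le (1+C\tau)|e_i|_2^2+C\tau^{1+2\theta-2\varepsilon}
\end{align*}
for all sufficiently small $\tau$. Since $e_0=0$, the discrete Gronwall inequality summed over the $n=T/\tau$ steps yields $\sup_{1\le i\le n}|e_i|_2^2\le C\tau^{2\theta-2\varepsilon}$, and taking the square root and relabelling $\theta-\varepsilon\rightsquigarrow\theta$ (admissible since $\theta$ is free in all of $(0,\tfrac12)$) gives the claim. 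I expect the main obstacle to lie in the bookkeeping of steps (ii)–(iv): one must arrange the decomposition so that the only term lacking a gain in $\tau$ is exactly the skew-symmetric one annihilated by orthogonality, and then verify that every surviving term carries either a Gronwall-compatible factor $\tau|e_i|_2|e_{i+1}|_2$ or a genuine consistency gain $\tau^{1+\theta-\varepsilon}$ — for which the $H^1$-temporal Hölder regularity of the exact solution, ultimately resting on Proposition \ref{W24}, is indispensable.
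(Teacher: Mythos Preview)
Your proposal is correct and follows essentially the same approach as the paper: both test the error identity against $e_{i+1}$, annihilate the dangerous unweighted term via the orthogonality $\langle(K*\xi_i)\cdot\nabla e_{i+1},e_{i+1}\rangle=0$, and bound the remaining pieces pathwise using the $W^{2,4}$/$H^1$-H\"older regularity of the exact solution (Proposition~\ref{W24}, Corollary~\ref{Hold}) together with the a.s.\ bound $\sup_i|\xi_i|_4\le C\tau^{-\varepsilon}$ from Proposition~\ref{lm3.2}, closing by discrete Gronwall and a relabelling of~$\theta$. The only difference is organizational: you substitute $\bar\xi_{i+1}=\xi(t_{i+1})+e_{i+1}-\Delta W^{curl}_{i+1}$ directly and freeze the exact integrand at $t_{i+1}$ to produce (i)--(iv), whereas the paper writes $e(t_{i+1})=e(t_i)+I^i+J^i$, then inserts $e(t_{i+1})$ inside $J^i$ (using the same orthogonality) and rewrites the resulting expression via the exact equation to obtain $J_1^i$, $J_2^i$; the terms and their estimates match one-to-one, and the final recursions differ only by the harmless choice $\varepsilon=\tfrac12-\theta$ the paper makes when invoking Proposition~\ref{lm3.2}.
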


\begin{proof}   
By comparing \eqref{inteq} and \eqref{numericalsol}, we infer that the error  $e(t_{i+1})$ satisfies 
\begin{align}\label{IJ}\notag
e(t_{i+1})
&=\xi_{i}-\tau (K*\xi_{i})\cdot \nabla \Phi_{\tau}(\xi_{i})-\xi(t_{i})+\int_{t_{i}}^{t_{i+1}}K*\xi(s)\cdot \nabla \xi(s)\mathrm{d}s\\\notag
&=e(t_{i})+\int_{t_{i}}^{t_{i+1}}K*\xi(s)\cdot \nabla \xi(s)-(K*\xi_{i})\cdot \nabla \Phi_{\tau}(\xi_{i})\mathrm{d}s\\
&=:e(t_{i})+I^i+J^i,
\end{align}
for $i=0,\ldots,n-1$, where
\begin{align*}
I^i&=\int_{0}^{\tau}K*(\xi(t_{i}+r)-\xi_{i})\cdot \nabla\xi(t_{i}+r)\ud r,\\
J^i&=\int_{0}^{\tau}(K*\xi_{i})\cdot \nabla[\xi(t_{i}+r)-\Phi_{\tau}(\xi_{i})]\ud r.
\end{align*}
Propositions \ref{lm3.2} and \ref{W24} ensure that $e(t_{i+1})\in L^2(\mathbb{T}^2)$, a.s., which allows us to apply the scalar product against $e(t_{i+1})$ to get
\begin{align}
|e(t_{i+1})|_{2}^{2}=&\left \langle e(t_{i})+I^i+J^{i},e(t_{i+1}) \right \rangle\notag\\
\leq&\frac{1}{2}|e(t_{i+1})|_{2}^{2}+\frac{1}{2}|e(t_{i})|_{2}^{2}+\left \langle I^i+J^i,e(t_{i+1}) \right \rangle.\label{ierror}
\end{align}
To estimate $\left \langle I^i,e(t_{i+1}) \right \rangle$, we divide it into two parts: 
\begin{align}
\left \langle I^i,e(t_{i+1}) \right \rangle
=&\int_{0}^{\tau}\left \langle K*(\xi(t_{i}+r)-\xi(t_{i}))\cdot \nabla\xi(t_{i}+r),e(t_{i+1}) \right \rangle\ud r\notag\\
&-\int_{0}^{\tau}\left \langle (K*e(t_{i}))\cdot \nabla\xi(t_{i}+r),e(t_{i+1}) \right \rangle\ud r\notag\\
=&:I^i_{1}+I^i_{2}.
\label{error1}
\end{align}
For the term $I_{1}^i$, we use inequality \eqref{K4}, Proposition \ref{W24}, and Corollary \ref{Hold} to derive that
\begin{align*}
|I_{1}^i|
\leq&\frac{  \tau}{2}|e(t_{i+1})|_{2}^{2}+\frac{1}{2 }\int_{0}^{\tau}|K*(\xi(t_{i}+r)-\xi(t_{i}))|_{4}^{2} |\nabla\xi(t_{i}+r)|_{4}^{2}\ud r\\
\leq&\frac{  \tau}{2}|e(t_{i+1})|_{2}^{2}+C\int_{0}^{\tau}|\xi(t_{i}+r)-\xi(t_{i})|_{2}^{2}\ud r\\
\leq&\frac{  \tau}{2}|e(t_{i+1})|_{2}^{2}+C \tau^{2\theta+1},
\end{align*}
for some $C=C(\xi(0),T,\omega,\theta)$.
Similarly, we also have
\begin{align*}
|I_{2}^i|\leq&\frac{  \tau}{2}|e(t_{i+1})|_{2}^{2}+\frac{1}{2 }\int_{0}^{\tau}\left|K*e(t_{i})\right|^2_{4}\left| \nabla\xi(t_{i}+r)\right|^2_{4}\ud r\\
\leq&\frac{  \tau}{2}|e(t_{i+1})|_{2}^{2}+C\int_{0}^{\tau}\left|e(t_{i})\right|^2_{2} \left\|\xi(t_{i}+r)\right\|_{1,4}^2\ud r\\
\leq&C\tau\left|e(t_{i})\right|_{2}^{2}+\frac{ \tau}{2}\left|e(t_{i+1}) \right|_{2}^{2}.
\end{align*}
The above estimates show
\begin{align}
|\left \langle I^i,e(t_{i+1}) \right \rangle|\leq\tau\left|e(t_{i+1}) \right|_{2}^{2}+C\tau\left|e(t_{i})\right|_{2}^{2}+C\tau^{2\theta+1}. \label{2}
\end{align}

It remains to estimate $\left \langle J^i,e(t_{i+1}) \right \rangle$. Observe that by \eqref{numericalsol} and \eqref{inteq} 
\begin{align}
\left \langle J^i,e(t_{i+1}) \right \rangle=&\int_{0}^{\tau}\left \langle (K*\xi_{i})\cdot \nabla[\xi(t_{i}+r)-\Phi_{\tau}(\xi_{i})],e(t_{i+1}) \right \rangle\ud r\notag\\
=&\int_{0}^{\tau}\left \langle (K*\xi_{i})\cdot \nabla[\xi(t_{i}+r)-\Phi_{\tau}(\xi_{i})+e(t_{i+1})],e(t_{i+1}) \right \rangle\ud r\notag\\
=&\int_{0}^{\tau}\left \langle (K*\xi_{i})\cdot \nabla\int_{t_{i}+r}^{t_{i+1}}K*\xi(s)\cdot \nabla\xi(s)\ud s,e(t_{i+1}) \right \rangle\ud r\notag\\
&+\int_{0}^{\tau}\left \langle (K*\xi_{i})\cdot \nabla\int_{t_{i}}^{t_{i}+r}\ud W^{curl}(s),e(t_{i+1}) \right \rangle\ud r\notag\\
=&:J_{1}^i+J_{2}^i.\label{error2}
\end{align}
Recall that $K\in L^\frac{4}{3}(\mathbb T^2)$. By Proposition \ref{lm3.2} with $\varepsilon=\frac{1}{2}-\theta$ and estimate \eqref{H1-hold},
\begin{align*}\notag
|J_{1}^i|
&\leq\frac{  \tau}{2}|e(t_{i+1})|_{2}^{2}+\frac{1}{2 }\int_{0}^{\tau}|K*\xi_{i}|_{\infty}^{2} \Big|\nabla\int_{t_{i}+r}^{t_{i+1}}K*\xi(s)\cdot \nabla\xi(s)\ud s\Big|_{2}^{2}\ud r\\\notag
&\leq\frac{  \tau}{2}|e(t_{i+1})|_{2}^{2}+C(T,\xi(0),\omega)\tau^{3}|\xi_{i}|_{4}^{2}\\
&\leq\frac{  \tau}{2}|e(t_{i+1})|_{2}^{2}+C(T,\xi(0),\omega,\theta)\tau^{2+2\theta}. 
\end{align*}
Similarly, the term $J_{2}^i$ is bounded as
\begin{align*}F
|J_{2}^i|
\leq \frac{ \tau}{2}|e(t_{i+1})|_{2}^{2}+C(T,\xi(0),\omega,\theta)\tau^{4\theta},
\end{align*}
thanks to \eqref{e2}.
Therefore, we conclude from the above estimates on $J^i_{1}$ and $J^i_{2}$ that
\begin{align*}
|\left \langle J^i,e(t_{i+1}) \right \rangle|\leq\tau|e(t_{i+1})|_{2}^{2}+C(T,\xi(0),\omega,\theta)\tau^{4\theta},
\end{align*}
which together with \eqref{ierror}, \eqref{2} gives that for any $i=0,\ldots,n-1$,
\begin{align*}
(1-4\tau)|e(t_{i+1})|_{2}^{2}\le(1+C\tau)|e(t_{i})|_{2}^{2}+C\tau^{4\theta}.
\end{align*}
Choosing sufficiently small $\tau>0$, the desired result follows from the discrete Gronwall lemma.
\end{proof}
\subsection{Convergence analysis in probabilty}
Utilizing Theorem \ref{convergenceorder} above, we conclude the following error estimation in probability sense, which shows that the order of convergence in probability of the splitting SIE method is nearly 1.
\begin{tho} \label{order1inprothm}
With the conditions of Theorem \ref{convergenceorder}, for any $\beta\in (0,1)$, we have
\begin{align}
\mathbb{P}\Big(\sup_{1\leq i \leq n}|e(t_{i})|_{2} \geq \tau^{\beta}\Big)\to 0,\ as\   \tau \to 0.  \label{order1inpro}
\end{align}
\end{tho}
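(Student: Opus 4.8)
The plan is to bootstrap the pathwise rate of Theorem \ref{convergenceorder} up to order almost $1$ by isolating a genuine martingale from the error dynamics and controlling it with the Lenglart--Rebolledo inequality (Lemma \ref{LRineq}). I would argue by induction on the exponent: writing $H(\alpha)$ for the assertion $\mathbb{P}(\sup_{1\le i\le n}|e(t_i)|_2>\tau^{\alpha})\to 0$ as $\tau\to0$, Theorem \ref{convergenceorder} furnishes the base case $H(\alpha)$ for every $\alpha<\theta$ (since the a.s.\ bound $C(\omega)\tau^{\theta}$ is finite), while the target \eqref{order1inpro} follows once $H(\alpha)$ holds for some $\alpha>\beta$. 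The crux is the inductive step $H(\alpha)\Rightarrow H(\alpha')$ for any $\alpha'<\tfrac{1+\alpha}{2}$; iterating $\alpha\mapsto\tfrac{1+\alpha}{2}$ from $\alpha_0\approx\tfrac12$ exceeds any prescribed $\beta<1$ after finitely many steps.

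First I would localize. Using Propositions \ref{lm3.2} and \ref{W24}, fix $M>0$ and stop at the first index where $|\xi_i|_4$ or $\sup_t\|\xi(t)\|_{2,4}$ exceeds $M$; the probability of stopping before $t_n$ tends to $0$ as $M\to\infty$, uniformly in $\tau$, so that on the localized event $|K*\xi_i|_\infty\le C\|K*\xi_i\|_{1,4}\le C|\xi_i|_4\le CM$ and all constants depend only on $M$. Next I would revisit the energy identity behind Theorem \ref{convergenceorder}, but keep the stochastic increments instead of bounding them pathwise. Starting from $|e(t_{i+1})|_2^2\le|e(t_i)|_2^2+2\langle I^i+J^i,e(t_{i+1})\rangle$ and using the orthogonality $\langle (K*\xi_i)\cdot\nabla e(t_{i+1}),e(t_{i+1})\rangle=0$ exactly as before, I would split each contribution into a deterministic part (handled pathwise) and a stochastic part driven by $\int_{t_i}^{t_i+r}\mathrm{d}W^{curl}$. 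Replacing the non-adapted factor $e(t_{i+1})$ by the adapted $e(t_i)$ (the difference $e(t_{i+1})-e(t_i)=I^i+J^i$ being quadratically small) and freezing $\nabla\xi(t_i+r)$ at $\nabla\xi(t_i)$ where needed, the leading stochastic contribution becomes a sum of conditionally centred increments
$$\mu_i=\int_0^\tau\Big\langle (K*\xi_i)\cdot\nabla\!\int_{t_i}^{t_i+r}\mathrm{d}W^{curl}(s),\,e(t_i)\Big\rangle\,\mathrm{d}r,$$
defining a discrete martingale $M_{t_j}=\sum_{i<j}\mu_i$; the remaining pieces (the $O(\tau)$ deterministic time increments, the replacement errors, and the It\^o-type corrections arising from the correlation between $\int_{t_i}^{t_i+r}\mathrm{d}W^{curl}$ and $\nabla\xi(t_i+r)$) contribute either an additive term of total order $\tau^2$ or a term $\le C\tau|e(t_i)|_2^2$ absorbed by the discrete Gronwall lemma.

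The decisive computation is the conditional second moment of $\mu_i$. Since $\int_0^\tau\!\int_{t_i}^{t_i+r}\mathrm{d}W^{curl}\,\mathrm{d}r$ is an \emph{integrated} Brownian increment, its variance scales like $\tau^3$ rather than $\tau$; using the divergence-free property to move the derivative onto $e(t_i)$ and Assumption \ref{assum} (which gives $\sum_k|k|^4c_{-k}^2<\infty$), I expect
$$\mathbb{E}\big[\mu_i^2\mid\mathcal{F}_{t_i}\big]\le C_M\,\tau^3\,|e(t_i)|_2^2,\qquad\langle M\rangle_T\le C_M\,\tau^3\sum_{i}|e(t_i)|_2^2\le C_M\,\tau^2\sup_i|e(t_i)|_2^2.$$
Hence, on the localized event, $\langle M\rangle_T^{1/2}\le C_M\tau\sup_i|e(t_i)|_2$, so $\mathbb{P}(\langle M\rangle_T^{1/2}>C_M\tau^{1+\alpha})\le\mathbb{P}(\sup_i|e(t_i)|_2>\tau^{\alpha})\to0$ by $H(\alpha)$. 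Feeding this into Lemma \ref{LRineq} with $\varepsilon=\tau^{2\alpha'}$ and $\delta=C_M\tau^{1+\alpha}$ gives $3\delta/\varepsilon=3C_M\tau^{1+\alpha-2\alpha'}\to0$ precisely when $\alpha'<\tfrac{1+\alpha}{2}$, so that $\sup_t|M_t|\le\tau^{2\alpha'}$ with probability tending to $1$. Combining this with the $O(\tau^2)$ deterministic remainder and the discrete Gronwall lemma yields $\sup_i|e(t_i)|_2^2\le C_M(\tau^2+\tau^{2\alpha'})$ on a set of probability $\to1$; since $\alpha'<1$, this is exactly $H(\alpha')$, closing the induction.

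The main obstacle, I expect, is the extraction and estimation of $M$: one must organise the quadratic error terms so that the genuinely stochastic part becomes a martingale with an $\mathcal{F}_{t_i}$-adapted integrand (despite the native appearance of the non-adapted $e(t_{i+1})$ and $\nabla\xi(t_i+r)$), confirm that the replacement and It\^o-correction errors are of order $\tau^2$ or Gronwall-absorbable, and secure the sharp $\tau^3$ conditional-variance scaling that makes $\langle M\rangle_T$ proportional to $\tau^2\sup_i|e(t_i)|_2^2$. This self-referential bound—the control of $M$ depending on the very quantity being estimated—is what forces the bootstrap, and the interplay with the non-globally Lipschitz nonlinearity $K*\xi\cdot\nabla\xi$ is precisely where the localization through Propositions \ref{lm3.2} and \ref{W24} is indispensable.
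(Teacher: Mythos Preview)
Your proposal is correct and follows essentially the same bootstrap as the paper: isolate the martingale part of the error dynamics, control it via Lenglart--Rebolledo using the $\tau^3$ scaling of the integrated increment $\int_0^\tau\!\int_{t_i}^{t_i+r}\mathrm dW^{curl}\,\mathrm dr$, and iterate $\alpha\mapsto\tfrac{1+\alpha}{2}$. Three technical points where the paper differs from your sketch are worth noting. First, there are \emph{two} martingale contributions, not one: besides your $\mu_i$ (the paper's $S^l_1$, coming from $J^i$), the term $I^i$ also carries a stochastic piece $\big\langle K*\!\int_{t_i}^{r}\mathrm dW^{curl}\cdot\nabla\xi(t_i),\,e(t_i)\big\rangle$ (the paper's $R^l_1$), which must be treated the same way. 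Second, Lemma~\ref{LRineq} is stated for \emph{continuous} local martingales, so the paper does not work with your discrete $M_{t_j}=\sum_{i<j}\mu_i$ directly; instead it applies the stochastic Fubini theorem $\int_{t_i}^{t_{i+1}}\!\int_{t_i}^{r}\mathrm dW^{curl}(s)\,\mathrm dr=\int_{t_i}^{t_{i+1}}(t_{i+1}-s)\,\mathrm dW^{curl}(s)$ to rewrite each sum as a genuine stochastic integral $\int_0^{t_l}\phi_l(s)\,\mathrm dW(s)$ with adapted integrand. Third, the paper does not localize via stopping times at a fixed level $M$; it works directly with the a.s.\ random constants from Proposition~\ref{W24} and the bound $\sup_i|\xi_i|_4\le C(\omega,\varepsilon)\tau^{-\varepsilon}$ of Proposition~\ref{lm3.2}, absorbing the $\tau^{-\varepsilon}$ into the small exponent $\nu$. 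Your stopping-time route would also work (uniformity in $\tau$ is guaranteed by $\mathbb E\sup_i|\xi_i|_4^p\le C$), and arguably avoids the bookkeeping of the $\tau^{-\varepsilon}$ factor; the paper's route avoids having to let $M\to\infty$ at the end.
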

\begin{proof}
In the following we assume that $\tau>0$ and $\nu>0$ are small enough.

\emph{Step 1:} We prove \eqref{order1inpro} for $\beta\in (0,\frac{3}{4})$.

Rearranging \eqref{ierror}, \eqref{error1}, and \eqref{error2}, taking $\theta =\frac{1-\nu}{2}$, we get for  $i=0,\ldots,n-1$,
\begin{align*}
(1-4\tau)|e(t_{i+1})|_{2}^{2}\le &(1+C\tau)|e(t_{i})|_{2}^{2}+2\int_{0}^{\tau}\left \langle (K*\xi_{i})\cdot \nabla\int_{t_{i}}^{t_{i}+r}\ud W^{curl}(s),e(t_{i+1}) \right \rangle\\
&+\left \langle K*\int_{t_{i}}^{t_{i}+r}\ud W^{curl}(s)\cdot \nabla\xi(t_{i}+r),e(t_{i+1}) \right \rangle\ud r+C_{\nu}\tau^{3-\nu}. 
\end{align*}
Let $l\in\{1,\ldots,n\}$ be arbitrarily fixed. Using the notation $M:=(4+C)/(1-4\tau)$,  we get
\begin{align*}
|e(t_{l})|_{2}^{2}\le &\frac{2}{1-4\tau}\sum_{i=0}^{l-1}\left(1+M\tau\right)^{l-i-1}\left[\int_{0}^{\tau}\left \langle (K*\xi_{i})\cdot \nabla\int_{t_{i}}^{t_{i}+r}\ud W^{curl}(s),e(t_{i+1}) \right \rangle\ud r\right.\\
&\left.+\int_{0}^{\tau}\left \langle K*\int_{t_{i}}^{t_{i}+r}\ud W^{curl}(s)\cdot \nabla\xi(t_{i}+r),e(t_{i+1}) \right \rangle\ud r\right]+C(\nu)\tau^{2-\nu}\\
=&:\frac{2}{1-4\tau}(S^l+R^l)+C(\nu)\tau^{2-\nu}.
\end{align*}
By noticing $W=K*W^{curl}$, we split $R^l$ into three terms:
\begin{align*}
R^l
=&\sum_{i=0}^{l-1}\left(1+M\tau\right)^{l-i-1}\int_{t_{i}}^{t_{i+1}}\left \langle \int_{t_{i}}^{r}\ud W(s)\cdot \nabla\xi(r),e(t_{i+1}) \right \rangle\ud r\\
=&\sum_{i=0}^{l-1}\left(1+M\tau\right)^{l-i-1}\int_{t_{i}}^{t_{i+1}}\left \langle \int_{t_{i}}^{r}\ud W(s)\cdot \nabla\xi(t_{i}),e(t_{i}) \right \rangle\ud r\\
&+\sum_{i=0}^{l-1}\left(1+M\tau\right)^{l-i-1}\int_{t_{i}}^{t_{i+1}}\left \langle \int_{t_{i}}^{r}\ud W(s)\cdot \nabla[\xi(r)-\xi(t_{i})],e(t_{i}) \right \rangle\ud r\\
&+\sum_{i=0}^{l-1}\left(1+M\tau\right)^{l-i-1}\int_{t_{i}}^{t_{i+1}}\left \langle \int_{t_{i}}^{r}\ud W(s)\cdot \nabla\xi(r),e(t_{i+1})-e(t_{i}) \right \rangle\ud r\\
=&:R^l_{1}+R^l_{2}+R^l_{3},
\end{align*} 
and estimate each term separately.
 
Using the stochastic Fubini theorem, we rewrite $R^l_{1}$ as a stochastic integral
\begin{align*}
R^l_{1}=&\sum_{i=0}^{l-1}\left(1+M\tau\right)^{l-i-1}\int_{t_{i}}^{t_{i+1}}\left \langle (t_{i+1}-s)e(t_{i}) , \nabla\xi(t_{i})\cdot\ud W(s)\right \rangle\\
=&\int_{0}^{t_{l}}\left \langle \left(1+M\tau\right)^{l-\frac{\eta_n(s) }{\tau}}(\eta_n(s) -s)e(\kappa_n(s) ) , \nabla\xi(\kappa_n(s) )\cdot\ud W(s)\right \rangle\\
=&:\int_{0}^{t_{l}}\phi_l(s)\ud W(s),
\end{align*}
where the process $\phi_l=\{\phi_l(s)\}_{s\in[0,T]}$  given by
\begin{align*}
\phi_l(s)h=\left \langle \left(1+M\tau\right)^{l-\frac{\eta_n(s) }{\tau}}(\eta_n(s) -s)e(\kappa_n(s) ) , \nabla\xi(\kappa_n(s) )\cdot h\right \rangle\mathbf{1}_{\{s\le t_l\}} ,\ \forall\ h\in \mathcal{H}
\end{align*}
is adapted to $\{\mathcal{F}_s\}_{s\in [0,T]}$. We claim that $\phi_l\in L^{\infty}(0,T;L_{2}(\mathcal{H}_{0},\mathbb{R})),$ a.s., where $\mathcal{H}_{0}=Q^{\frac{1}{2}}(\mathcal{H})$ is the Cameron--Martin space of $W$.  Indeed, for any $s\in[0,T]$,
\begin{align*}
&||\phi_l(s)||_{ L_{2}(\mathcal{H}_{0},\mathbb{R})}^{2}\\=&\sum_{k\in \mathbb{Z}_{0}^{2}}\left \langle \left(1+M\tau\right)^{l-\frac{\eta_n(s) }{\tau}}(\eta_n(s) -s)e(\kappa_n(s) ) , \nabla\xi(\kappa_n(s) )\cdot Q^{\frac{1}{2}}g_{k}\right \rangle^2\mathbf{1}_{\{s\le t_l\}}\\
\le &\sum_{k\in \mathbb{Z}_{0}^{2}}\left(1+M\tau\right)^{2\left(l-\frac{\eta_n(s) }{\tau}\right)}(\eta_n(s) -s)^{2}|e(\kappa_n(s) )|_{2}^{2}c_{k}^{2}|\nabla\xi(\kappa_n(s) )\cdot g_{k}|_{2}^{2}\\
\le &C(\omega,\xi(0),T,\nu)\tau^{3-\nu}\left(\sup_{s\in[0,T]}\|\xi(s)\|_{1,4}^{2}\right)\sum_{k\in \mathbb{Z}_{0}^{2}}c_{k}^{2} |g_{k}|_{4}^{2}\\
\le &C(\omega,\xi(0),T,\nu)\tau^{3-\nu}<\infty,\ a.s.,
\end{align*}
thanks to Assumption \ref{assum}, Proposition \ref{W24}, and Theorem \ref{convergenceorder}.
Thus, 
\begin{align*}
\mathbb{P}\left(\int_{0}^{T}||\phi_l(s)||_{ L_{2}(\mathcal{H}_{0},\mathbb{R})}^{2}\ud s<\infty\right)=1,
\end{align*}
from which we deduce that $\{\int_{0}^{t}\phi_l(s)\ud W(s)\}_{t\in[0,T]}$ is a continuous real-valued local martingale. This fact allows us to apply Lemma \ref{LRineq} to obtain
\begin{align}\notag
\mathbb{P}\left(\sup_{1\le l\le n}|R^l_{1}|\ge \tau^{\frac{3}{2}-2\nu}\right)
\le &\mathbb{P}\left(\sup_{t\in[0,T]}\left|\int_{0}^{t}\phi_l(s)\ud W(s)\right|\ge \tau^{\frac{3}{2}-2\nu}\right)\\
\le &3\tau^{\nu}+ \mathbb{P}\left(\left[\int_{0}^{T}||\phi_l(s)||_{ L_{2}(\mathcal{H}_{0},\mathbb{R})}^{2}\ud s\right]^{\frac{1}{2}}\ge \tau^{\frac{3}{2}-\nu}\right)\notag\\
\le &3\tau^{\nu}+ \mathbb{P}\left( C(\omega,T,\xi_{0})\tau^{\frac{\nu}{2}}\ge 1\right) \to 0, \ as \ \tau \to 0. \label{coninp}
\end{align}

To estimate the term $R^l_{2}$, we notice that, analogue to \eqref{e2}, Assumption \ref{assum} and the embedding relation $H^{2}(\mathbb{T}^2)\hookrightarrow L^{\infty}(\mathbb{T}^2)$ imply
\begin{equation}\label{hol-noi}
\left| \int_{t_{i}}^{r}\ud W(s)\right|_{\infty}\le C\left\| \int_{t_{i}}^{r}\ud W(s)\right\|_{2}\le C(\nu,\omega)(r-t_i)^{\frac{1-\nu}{2}}.
\end{equation} 
Therefore, according to \eqref{hol-noi},  Corollary \ref{Hold}, and Theorem \ref{convergenceorder}, 
\begin{align*}
|R^l_{2}|
\le&C\sum_{i=0}^{l-1}\int_{t_{i}}^{t_{i+1}}\left| \int_{t_{i}}^{r}\ud W(s)\right|_{\infty}\left|\nabla[\xi(r)-\xi(t_{i})]\right|_{2}\left|e(t_{i})\right|_{2}\ud r
\le C(\omega,T,\xi(0),\nu)\tau^{\frac{3-3\nu}{2}},
\end{align*}
for any $l=1,\ldots,n$.
Recalling \eqref{IJ}, we obtain by the estimates on $I^i$ and $J^i$ in the proof of Theorem \ref{convergenceorder} that 
\begin{align}\label{I+J}
|I^i|_2+|J^i|_2\le C(\nu,\omega,\xi(0),T)\tau^{\frac{3-\nu}{2}},
\end{align} 
from which we derive that for any $l=1,\ldots,n$,
\begin{align*}
|R^l_{3}|\le& C\sum_{i=0}^{l-1}\int_{t_{i}}^{t_{i+1}}\left|\left \langle \int_{t_{i}}^{r}\ud W(s)\cdot \nabla\xi(r),I^i+J^i\right \rangle\right|\ud r
\le C(\omega,T,\xi(0),\nu)\tau^{2-\nu}.
\end{align*}

For the term $S^l$, we decompose it  into two parts $S^l=S^l_{1}+S^l_{2}$, where
\begin{align*}
S^l_{1}=&\frac{1}{1-C\tau}\sum_{i=1}^{l-1}\left(1+M\tau\right)^{l-i-1}\int_{t_{i}}^{t_{i+1}}\Big \langle (K*\xi_{i})\cdot \nabla\int_{t_{i}}^{r}\ud W^{curl}(s),e(t_{i}) \Big \rangle\ud r,\\
S^l_{2}=&\frac{1}{1-C\tau}\sum_{i=1}^{l-1}\left(1+M\tau\right)^{l-i-1}\int_{t_{i}}^{t_{i+1}}\Big \langle (K*\xi_{i})\cdot \nabla\int_{t_{i}}^{r}\ud W^{curl}(s),e(t_{i+1})-e(t_{i}) \Big \rangle\ud r.
\end{align*}
And we proceed to utilize the similar arguments as in the estimations of $R^l_1$ and $R^l_3$ to deal with $S^l_{1}$ and $S^l_{2}$, respectively. More precisely, for any $f\in H^{1}(\mathbb{T}^2)\cap\mathcal{E}$, let us define the $\{\mathcal{F}_s\}_{\,s\in [0,T]}$-adapted process $\psi_l=\{\psi_l(s)\}_{s\in[0,T]}$ by 
\begin{align*}
\psi_l(s)f=\left \langle \left(1+M\tau\right)^{l-\frac{\eta_n(s) }{\tau}}(\eta_n(s) -s)e(\kappa_n(s) ) , (K*\xi_{\frac{\kappa_n(s) }{\tau}})\cdot \nabla f\right \rangle\mathbf{1}_{\{s\le t_l\}}.
\end{align*}
Then using stochastic Fubini theorem again, we reformulate $S^l_{1}$ as follows:
\begin{align*}
S^l_{1}
=&\frac{1}{1-C\tau}\sum_{i=1}^{l-1}\left(1+M\tau\right)^{l-i-1}\left \langle \int_{t_{i}}^{t_{i+1}}(t_{i+1}-s) e(t_{i}),(K*\xi_{i})\cdot \nabla\ud W^{curl}(s) \right \rangle\\
=&\frac{1}{1-C\tau}\left \langle \int_{0}^{t_{l}}\left(1+M\tau\right)^{l-\frac{\eta_n(s) }{\tau}}(\eta_n(s) -s) e(\kappa_n(s) ),(K*\xi_{\frac{\kappa_n(s) }{\tau}})\cdot \nabla\ud W^{curl}(s) \right \rangle\\
=&\frac{1}{1-C\tau}\int_{0}^{t_{l}}\psi_l(s)\ud W^{curl}(s).
\end{align*}
Recall that $W^{curl}$ is an $H^{1}(\mathbb{T}^2)\cap \mathcal{E}$-valued $Q_{1}$-Wiener process  whose Cameron--Martin space is denoted by $\mathcal{E}_{0}=Q_{1}^{\frac{1}{2}}(H^{1}(\mathbb{T}^2)\cap\mathcal{E})$.
Assumption \ref{assum} and Theorem \ref{convergenceorder} yield
\begin{align*}
&||\psi_l(s)||_{ L_{2}(\mathcal{E}_{0},\mathbb{R})}^{2}\\=&\sum_{k\in \mathbb{Z}_{0}^{2}}\left \langle \left(1+M\tau\right)^{l-\frac{\eta_n(s) }{\tau}}(\eta_n(s) -s)e(\kappa_n(s) ) , (K*\xi_{\frac{\kappa_n(s) }{\tau}})\cdot \nabla Q_{1}^{\frac{1}{2}}e_{k}\right \rangle^{2}\mathbf{1}_{\{s\le t_l\}}\\
\le &e^{MT}\tau^2\sum_{k\in \mathbb{Z}_{0}^{2}}|e(\kappa_n(s) )|_{2}^{2}4\pi^{2}|k|^{2}c_{-k}^{2}|(K*\xi_{\frac{\kappa_n(s) }{\tau}})\cdot \nabla e_{k}|_{2}^{2}\\
\le &C(\omega,\xi(0),T)\tau^{3-\nu}|K*\xi_{\frac{\kappa_n(s) }{\tau}}|_{\infty}^{2},\ a.s., \ \forall\, s\in[0,T].
\end{align*}
Notice that Young inequality for convolution together with Proposition \ref{lm3.2} indicates 
\begin{align}\label{convolu}
|K*\xi_{\frac{\kappa_n(s) }{\tau}}|_{\infty}\le|K|_{\frac{4}{3}}|\xi_{\frac{\kappa_n(s) }{\tau}}|_4\le C(\omega,\xi(0),T,\nu)\tau^{-\nu}.
\end{align}
Thus, it holds that
\begin{align*}
\mathbb{P}\left(\int_{0}^{T}||\psi_l(s)||_{ L_{2}(\mathcal{E}_{0},\mathbb{R})}^{2}\ud s<\infty\right)=1,
\end{align*}
and $\{\int_{0}^{t}\psi_l(s)\ud W^{curl}(s)\}_{t\in[0,T]}$ is a continuous real-valued local martingale. Similar to \eqref{coninp}, we arrive at
\begin{align*}
\mathbb{P}\left(\sup_{1\le l\le n}|S^l_{1}|\ge \tau^{\frac{3}{2}-3\nu}\right)\le&\mathbb{P}\left(\sup_{t\in[0,T]}\left|\int_{0}^{t}\psi_l(s)\ud W(s)\right|\ge \tau^{\frac{3}{2}-3\nu}\right)\\\le &3\tau^{\nu}+ \mathbb{P}\left(\left[\int_{0}^{T}||\psi_l(s)||_{ L_{2}(\mathcal{E}_{0},\mathbb{R})}^{2}\ud s\right]^{\frac{1}{2}}\ge \tau^{\frac{3}{2}-2\nu}\right)\\
\le &3\tau^{\nu}+ \mathbb{P}\left( C(\omega,T,\xi(0))\tau^{\frac{\nu}{2}}\ge 1\right) \to 0, \ as \ \tau \to 0.
\end{align*}
Similar to $R^l_{3}$, one can also get from  \eqref{I+J} and \eqref{convolu} that for $l=1,\ldots,n$,
\begin{align*}
|S^l_{2}|\le C\sum_{i=0}^{l-1}\int_{t_{i}}^{t_{i+1}}\left|\left \langle (K*\xi_{i})\cdot \nabla\int_{t_{i}}^{r}\ud W^{curl}(s),I^i+J^i \right \rangle\right|\ud r\le C(\omega,T,\xi(0),\nu)\tau^{2-2\nu}.
\end{align*}
Combining the above estimates together, we conclude that
\begin{align}
\mathbb{P}\Big(\sup_{1\leq l \leq n}|e(t_{l})|_{2} \geq \tau^{\frac{3-6\nu}{4}}\Big)\to 0,\ as\   \tau \to 0, \label{convergenceinpro2}
\end{align}
which completes the proof of \emph{Step 1}.

\emph{Step 2:} We prove \eqref{order1inpro} for $\beta\in [\frac{3}{4},1)$.

First,  the convergence order in probability can be improved to be close to $\frac{7}{8}$ by utilizing \eqref{convergenceinpro2} to reestimate the terms $R^l_1$, $R^l_2$, and $S^l_1$.

For $R^l_1$, similar to \eqref{coninp}, except using \eqref{convergenceinpro2} instead of Theorem \ref{convergenceorder}, we have 
\begin{align*}
\mathbb{P}\left(\sup_{1\le l\le n}|R^l_{1}|\ge \tau^{\frac{7}{4}-2\nu}\right)\le
&3\tau^{\nu}+ \mathbb{P}\left(\left[\int_{0}^{T}||\phi_l(s)||_{ L_{2}(\mathcal{H}_{0},\mathbb{R})}^{2}\ud s\right]^{\frac{1}{2}}\ge\tau^{\frac{7}{4}-\nu}\right)\\
\le &3\tau^{\nu}+ \mathbb{P}\left(C(\omega,T,\xi(0),\nu)\tau\sup_{1\le i\le n}|e(t_{i})|_{2}\ge \tau^{\frac{7}{4}-\nu}\right)\\
\to& 0, \ as \ \tau \to 0.
\end{align*}
Similarly, it also holds that
\begin{align*}
\mathbb{P}\left(\sup_{1\le l\le n}|S^l_{1}|
\ge \tau^{\frac{7}{4}-2\nu}\right)\to 0, \ as \ \tau \to 0.
\end{align*}
Observe that
\begin{align*}
\mathbb{P}\left(|R^l_{2}|\ge \tau^{\frac{7}{4}-2\nu}\right)\le \mathbb{P}\left(C(\omega,T,\xi(0),\nu)\tau^{1-\nu}\sup_{1\le i\le n}|e(t_{i})|_{2}\ge \tau^{\frac{7}{4}-2\nu}\right)\to 0, \ as \ \tau \to 0.
\end{align*}
Combining the above estimates, we conclude that
\begin{align*}
\mathbb{P}\Big(\sup_{1\leq i \leq n}|e(t_{i})|_{2} \geq \tau^{\frac{7}{8}-\nu}\Big)\to 0,\ as\   \tau \to 0. 
\end{align*}
Finally, we finish the proof by repeating the above arguments.
\end{proof}

\begin{cor}
With the conditions of Theorem \ref{convergenceorder}, for any $\beta\in (0,1)$, we have
\begin{align*}
\mathbb{P}\Big(\sup_{1\leq i \leq n}[||u(t_{i})-u_{i}||_{1}+||\bm\pi(t_{i})-\bm\pi_{i}||_{1}]\geq \tau^{\beta}\Big)\to 0,\ as\   \tau \to 0.  
\end{align*}
\end{cor}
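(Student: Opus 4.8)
The plan is to reduce both the velocity error and the pressure error to the vorticity error $|e(t_i)|_2=|\xi_i-\xi(t_i)|_2$ already controlled in Theorem \ref{order1inprothm}, and then to transfer the convergence in probability by a random-constant argument. First I would treat the velocity, which is essentially immediate. Since $u=K*\xi$ and $u_i=K*\xi_i$, we have $u(t_i)-u_i=-K*e(t_i)$, and Lemma \ref{Kbound} with $p=2$ bounds both $|K*e(t_i)|_2$ and $|\nabla(K*e(t_i))|_2$ by $C|e(t_i)|_2$, so that
\[
\|u(t_i)-u_i\|_1\le C|e(t_i)|_2 .
\]
This step loses nothing in the convergence rate.

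Next, the pressure. Using $\nabla\bm\pi=(\mathcal P-{\rm Id})(u\cdot\nabla)u$ from \eqref{pre}, the boundedness of $\mathcal P-{\rm Id}$ on $[L^2(\mathbb{T}^2)]^2$, and the Poincar\'e inequality for the zero-mean field $\bm\pi(t_i)-\bm\pi_i\in\mathcal E$, I would estimate
\[
\|\bm\pi(t_i)-\bm\pi_i\|_1\le C\,|\nabla(\bm\pi(t_i)-\bm\pi_i)|_2\le C\big|(u(t_i)\cdot\nabla)u(t_i)-(u_i\cdot\nabla)u_i\big|_2 .
\]
Writing $\delta u=u(t_i)-u_i=-K*e(t_i)$ and splitting the nonlinear difference as $(\delta u\cdot\nabla)u(t_i)+(u_i\cdot\nabla)\delta u$, I would bound the first term by $|\delta u|_4|\nabla u(t_i)|_4\le C(\omega)|e(t_i)|_2$ via \eqref{K4}, Lemma \ref{Kbound}, and the a.s. bound $\sup_t\|\xi(t)\|_{2,4}<\infty$ from Proposition \ref{W24}. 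The second term I would control by $|u_i|_\infty|\nabla\delta u|_2$; here Young's inequality for convolution gives $|u_i|_\infty\le|K|_{4/3}|\xi_i|_4$, Proposition \ref{lm3.2} furnishes $\sup_{1\le i\le n}|\xi_i|_4\le C(\omega,\xi(0),T,\varepsilon)\tau^{-\varepsilon}$ a.s., and $|\nabla\delta u|_2\le C|e(t_i)|_2$ again by Lemma \ref{Kbound}. Collecting everything yields the key a.s. estimate
\[
\sup_{1\le i\le n}\big[\|u(t_i)-u_i\|_1+\|\bm\pi(t_i)-\bm\pi_i\|_1\big]\le C(\omega,\xi(0),T,\varepsilon)\,\tau^{-\varepsilon}\sup_{1\le i\le n}|e(t_i)|_2 ,\quad a.s.,
\]
valid for every $\varepsilon>0$.

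Finally I would pass to probability. Fix $\beta\in(0,1)$ and choose $\varepsilon>0$ with $\beta+2\varepsilon<1$. For $R>0$, on the event $\{C(\omega,\xi(0),T,\varepsilon)\le R\}$ the displayed bound forces $\sup_i|e(t_i)|_2\ge\tau^{\beta+\varepsilon}/R\ge\tau^{\beta+2\varepsilon}$ (the last inequality holding for $\tau$ small) whenever the left-hand side exceeds $\tau^\beta$, whence
\[
\mathbb P\Big(\sup_{1\le i\le n}[\|u(t_i)-u_i\|_1+\|\bm\pi(t_i)-\bm\pi_i\|_1]\ge\tau^\beta\Big)\le\mathbb P\big(C>R\big)+\mathbb P\Big(\sup_{1\le i\le n}|e(t_i)|_2\ge\tau^{\beta+2\varepsilon}\Big).
\]
The second term tends to $0$ as $\tau\to0$ by Theorem \ref{order1inprothm} since $\beta+2\varepsilon<1$, and letting $R\to\infty$ sends the first to $0$ because $C<\infty$ a.s. The main obstacle is the pressure estimate: its quadratic structure forces an $L^\infty$ bound on $u_i$, which is only available up to the factor $\tau^{-\varepsilon}$ from Proposition \ref{lm3.2}, so the delicate point is verifying that this mild, $\varepsilon$-small loss is harmless in the probability limit (precisely the role of the $\beta+2\varepsilon<1$ choice). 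The velocity contribution, by contrast, is routine.
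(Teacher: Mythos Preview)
Your proof is correct and follows essentially the same route as the paper: the same velocity bound via Lemma~\ref{Kbound}, the same Poincar\'e-plus-$(\mathcal P-{\rm Id})$ reduction for the pressure, the same splitting $(\delta u\cdot\nabla)u(t_i)+(u_i\cdot\nabla)\delta u$, and the same use of $|u_i|_\infty\le|K|_{4/3}|\xi_i|_4\le C(\omega,\varepsilon)\tau^{-\varepsilon}$ from Proposition~\ref{lm3.2}. The only differences are cosmetic: you bound the first pressure piece by $|\delta u|_4|\nabla u(t_i)|_4$ where the paper uses $|\delta u|_2\|u(t_i)\|_{1,\infty}$, and you spell out the random-constant/$\varepsilon$-absorption argument that the paper compresses into ``Theorem~\ref{order1inprothm} concludes the proof.''
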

\begin{proof}
It follows immediately from Lemma \ref{Kbound} that
\begin{align*}
||u(t_{i})-u_{i}||_{1}=||K*e(t_{i})||_{1}\le C|e(t_{i})|_{2}.
\end{align*}
Poincáre inequality, Lemma \ref{Kbound}, and the fact $K\in L^{\frac{4}{3}}(\mathbb{T}^2)$ yield
\begin{align*}
||\bm\pi(t_{i})-\bm\pi_{i}||_{1}\le&C|\nabla\bm\pi(t_{i})-\nabla\bm\pi_{i}|_{2}\\
=&C|(\mathcal{P}-{\rm Id})[(u(t_{i}) \cdot \nabla) u(t_{i})-(u_i\cdot \nabla) u_{i}]|_2\\
\le&C|((u(t_{i})-u_i) \cdot \nabla) u(t_{i})+(u_i\cdot \nabla) (u(t_{i})-u_{i})|_2\\
\le&C[|u(t_{i})-u_i|_2 \|u(t_{i})\|_{1,\infty}+|u_i|_{\infty} |\nabla(u(t_{i})-u_{i})|_2]\\
\le&C(\omega,T,\xi_{0})|e(t_{i})|_2+C|K|_{\frac{4}{3}}|\xi_i|_{4} |e(t_{i})|_2\\
\le&C(\omega,T,\xi_{0},\varepsilon)(1+\tau^{-\varepsilon})|e(t_{i})|_2,
\end{align*}
for sufficiently small $\varepsilon>0$. Finally, Theorem \ref{order1inprothm} concludes the proof.
\end{proof}
\section{Future work}
The strong convergence rate of a numerical approximation for SEEs is a fundamental index to characterize the efficiency and accuracy of a numerical method, which has been widely studied in the case of stochastic semilinear evolution equations.  Due to the presence of the quadratic nonlinear term, if one aims to obtain the strong convergence rate of a certain numerical method for stochastic Euler equations,
the exponential moment estimates of both the  exact solution and the  numerical solutions may be required.
However, we are not aware of any existing results on exponential moment estimates, or even algebraic moment estimates of the exact solution. We leave the construction of a strong convergence numerical method for Eq. \eqref{Euler} for a future work, where the tamed technique, truncated technique or splitting technique may be employed.

\bibliographystyle{plain}
\bibliography{Euler-reference}

\end{document}